\begin{document}
\newtheorem{claim}{Claim}[section]
\newtheorem{theorem}{Theorem}[section]
\newtheorem{corollary}[theorem]{Corollary}
\newtheorem{definition}[theorem]{Definition}
\newtheorem{conjecture}[theorem]{Conjecture}
\newtheorem{question}[theorem]{Question}
\newtheorem{lemma}[theorem]{Lemma}
\newtheorem{proposition}[theorem]{Proposition}
\newtheorem{problem}[theorem]{Problem}
\newenvironment{proof}{\noindent {\bf
Proof.}}{\rule{3mm}{3mm}\par\medskip}
\newcommand{\remark}{\medskip\par\noindent {\bf Remark.~~}}
\newcommand{\pp}{{\it p.}}
\newcommand{\de}{\em}

\newcommand{\JEC}{{\it Europ. J. Combinatorics},  }
\newcommand{\JCTB}{{\it J. Combin. Theory Ser. B.}, }
\newcommand{\JCT}{{\it J. Combin. Theory}, }
\newcommand{\JGT}{{\it J. Graph Theory}, }
\newcommand{\ComHung}{{\it Combinatorica}, }
\newcommand{\DM}{{\it Discrete Math.}, }
\newcommand{\ARS}{{\it Ars Combin.}, }
\newcommand{\SIAMDM}{{\it SIAM J. Discrete Math.}, }
\newcommand{\SIAMADM}{{\it SIAM J. Algebraic Discrete Methods}, }
\newcommand{\SIAMC}{{\it SIAM J. Comput.}, }
\newcommand{\ConAMS}{{\it Contemp. Math. AMS}, }
\newcommand{\TransAMS}{{\it Trans. Amer. Math. Soc.}, }
\newcommand{\AnDM}{{\it Ann. Discrete Math.}, }
\newcommand{\NBS}{{\it J. Res. Nat. Bur. Standards} {\rm B}, }
\newcommand{\ConNum}{{\it Congr. Numer.}, }
\newcommand{\CJM}{{\it Canad. J. Math.}, }
\newcommand{\JLMS}{{\it J. London Math. Soc.}, }
\newcommand{\PLMS}{{\it Proc. London Math. Soc.}, }
\newcommand{\PAMS}{{\it Proc. Amer. Math. Soc.}, }
\newcommand{\JCMCC}{{\it J. Combin. Math. Combin. Comput.}, }


\title{Shortest circuit covers of signed graphs}
\date{}

\author{
{Jian Cheng, You Lu, Rong Luo, and Cun-Quan Zhang\thanks{This research project has been partially supported
 by an  NSA grant   H98230-14-1-0154, an NSF grant DMS-1264800}} \\
Department of Mathematics\\
 West Virginia University\\
 Morgantown, WV 26505\\
Email:  \{jiancheng, yolu1, rluo, cqzhang\}@math.wvu.edu \\
}
\date{}
\maketitle

\begin{abstract}
A shortest circuit cover ${\cal F}$ of a bridgeless graph $G$ is a family of circuits that covers every edge of $G$ and is of minimum total length.
The total length of a shortest circuit cover
${\cal F}$  of $G$ is denoted by $SCC(G)$.
For ordinary graphs (graphs without sign), the subject of shortest circuit cover is closely related to some mainstream areas, such as, Tutte's integer flow theory, circuit double cover conjecture, Fulkerson conjecture, and others.
For signed graphs $G$, it is proved recently by
 M\'a\v{c}ajov\'a,   Raspaud,    Rollov\'a and   \v{S}koviera that
$SCC(G) \leq 11|E|$ if $G$ is s-bridgeless, and
$SCC(G) \leq 9|E|$ if $G$ is $2$-edge-connected.
 In this paper this result is improved as follows,
$$SCC(G) ~ \leq ~ |E| + 3|V| +z$$ where
$z ~=~ \min \{ \frac{2}{3}|E|+\frac{4}{3}\epsilon_N-7,~
|V| + 2\epsilon_N -8\}$ and
$\epsilon_N$ is the negativeness of $G$.
The above upper bound can be further reduced if $G$ is $2$-edge-connected  with  even negativeness.

\medskip
\noindent
{\bf Keywords}: Signed graph, shortest circuit cover, signed circuit cover, negativeness, generalized barbell.
\end{abstract}

\maketitle

\section{Introduction}
For terminology and notations not defined here we follow \cite{Bondy2008, Diestel2010, West1996}.
Graphs considered in this paper may have multiple edges or loops. A {\em circuit cover} of a bridgeless
 graph $G$ is a family $\cal C$ of circuits such that each edge of $G$ belongs to at least one member of $\cal C$. The {\em length} of $\cal C$ is the total length of circuits in $\cal C$. A minimum length of a circuit cover of $G$ is denoted by $SCC(G)$.

For  ordinary graphs (graphs without sign), the subject of shortest circuit cover is
 not only a discrete optimization problem
 \cite{Itai1978}, but also
 closely related to some mainstream areas in graph theory,
  such as, Tutte's integer flow theory
\cite{Alon1985SIAM, Bermond1983JCTB, FanJGT1994, Jackson1990, Jamshy1987JCTB, Macajova2011JGT,Zhang1990JGT},
circuit double cover conjecture
\cite{Jamshy1992JCTB, Kostochka1995JGT}, Fulkerson conjecture
\cite{FanJCTB1994}, snarks  and graph minors
\cite{Alspach1994, Jackson1994}.
It is proved by
  Bermond,   Jackson and   Jaeger \cite{Bermond1983JCTB}
that
{\em every graph admitting a nowhere-zero $4$-flow has
$SCC(G) \leq \frac{4|E|}{3}$}.
By applying Seymour's $6$-flow theorem
\cite{Seymour1981}
 or Jaeger's $8$-flow theorem
\cite{Jaeger1979},
  Alon and   Tarsi
\cite{Alon1985SIAM},
  and Bermond,   Jackson and   Jaeger \cite{Bermond1983JCTB} proved that
  {\em
every bridgeless graph $G$
 has $SCC(G)\leq \frac{25|E|}{15}$.}
One of the most famous open problems in this area was proposed by
Alon and   Tarsi
\cite{Alon1985SIAM},
that
{\em
every bridgeless graph $G$
 has $SCC(G)\leq \frac{21|E|}{15}$.}
It is proved
by Jamshy and Tarsi \cite{Jamshy1992JCTB}
that
{\em the above conjecture
implies the circuit double cover conjecture.}
The relations between $SCC(G)$ and Fulkerson conjecture, Tutte's $3$-flow and $5$-flow conjectures were studied by Fan, Jamshy,
 Raspaud and Tarsi in
\cite{FanJCTB1994,Jamshy1987JCTB,FanJGT1994}.

\medskip
For signed graphs.
the following
upper bounds for shortest circuit covers were recently estimated in
\cite{Macajova2015JGT}.

\begin{theorem}
\label{TH: MRRS}
{\rm (M\'a\v{c}ajov\'a, Raspaud,  Rollov\'a and \v{S}koviera
\cite{Macajova2015JGT})}
Let $G$
be an s-bridgeless signed graph.

 (1)  In general,
$SCC(G)\leq 11 |E|$.

(2) If $G$
is $2$-edge-connected, then  $SCC(G)\leq 9 |E|$.
\end{theorem}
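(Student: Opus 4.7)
Before reading the proof, my plan would be as follows.

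The structural starting point is that a signed circuit of a signed graph is either a balanced cycle or a \emph{barbell}: the edge-disjoint union of two unbalanced cycles joined by a (possibly trivial) path, with the joining path counted twice in the circuit-cover length. This doubling of path edges inside barbells is what drives constants up relative to the ordinary-graph setting, and any bound of the form $c|E|$ must essentially absorb this overhead into the constant $c$.

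The main engine would be a nowhere-zero integer signed-flow theorem. For an s-bridgeless signed graph $G$, results in the spirit of Bouchet and DeVos guarantee a nowhere-zero integer signed $k$-flow $\varphi$ with $|\varphi(e)|$ bounded by a small integer. Following the classical Bermond--Jackson--Jaeger recipe, I would write the all-ones vector on $E(G)$ as a non-negative integer combination of incidence vectors of signed Eulerian subgraphs, each of which decomposes into signed circuits (balanced cycles and barbells). The total length of the resulting cover equals the sum of edge multiplicities plus the extra cost of barbell paths, and is controlled by $|\varphi|$. For the general bound one needs to verify that, together with the doubled-edge overhead, the total stays below $11|E|$; this should follow from plugging in the best available flow bound and bookkeeping the barbell contribution uniformly over edges.

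To improve to $9|E|$ under $2$-edge-connectedness, I would exploit the stronger connectivity to choose signed circuits with shorter joining paths, and to let several barbells share the same connecting path so that its edges are not doubled many times. A natural way is via an ear-type decomposition starting from an unbalanced cycle: each new ear either closes a balanced cycle (no doubling) or completes a barbell with an earlier unbalanced ear through a short path guaranteed by $2$-edge-connectedness. The main obstacle, and the most delicate step of the argument, is precisely this quantification of the barbell overhead: turning the vague intuition that ``$2$-edge-connectedness makes connecting paths short and shareable'' into an explicit saving of $2|E|$ over the general case. Once that bookkeeping is in place, both bounds should come out by straightforward summation.
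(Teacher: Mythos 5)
This statement is quoted in the paper as a known result of M\'a\v{c}ajov\'a, Raspaud, Rollov\'a and \v{S}koviera and is not proved there, so there is no in-paper argument to compare against; your attempt therefore has to stand on its own, and as it stands it is a plan rather than a proof. The decisive gap is that neither constant is ever derived: you explicitly defer ``the most delicate step'' (quantifying the barbell overhead, and the claimed saving of $2|E|$ under $2$-edge-connectedness), which is exactly where the content of the theorem lies. A bound of the form $c|E|$ with an unspecified $c$ is not the statement to be proved.

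Beyond incompleteness, two ingredients of the plan are unsound. First, the Bermond--Jackson--Jaeger recipe does not transfer to signed graphs the way you describe: a signed Eulerian subgraph need \emph{not} decompose into signed circuits --- an Eulerian signed graph with an odd number of negative edges admits no such decomposition (this parity obstruction is precisely why Lemma~\ref{SCDC of eulerian graph} of the present paper must assume an even number of negative edges, and why the whole machinery of generalized barbells and parity corrections exists). So writing the all-ones vector as a combination of Eulerian supports of a signed flow does not automatically yield a signed circuit cover. Second, your premise that the connecting path of a barbell is ``counted twice in the circuit-cover length'' contradicts the definition in use here (the length of a signed circuit is simply its number of edges), so the overhead you propose to control is not even the right quantity. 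To repair the argument you would need either the actual MRRS route --- isolating a subgraph handling the negative edges (with the parity issue resolved) and covering the bridgeless positive remainder by classical unsigned SCC bounds --- or a correct signed-flow-to-cover translation that respects the parity constraint, together with explicit bookkeeping producing $11$ and $9$.
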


In this paper, Theorem~\ref{TH: MRRS}
is further improved as follows.

\begin{theorem}
\label{TH: main result}
Let $G$ be an s-bridgeless signed graph with negativeness $\epsilon_N > 0$.

(1)  In general,
$$SCC(G) ~ \leq ~ |E| + 3|V| +z_1, $$ where
$z_1  ~=~ \min \{ \frac{2}{3}|E|+\frac{4}{3}\epsilon_N-7,~
|V| + 2\epsilon_N -8\}$.

(2) If $G$
is
  $2$-edge-connected and $\epsilon_N$ is even,
  then
$$SCC(G) ~ \leq ~ |E| + 2|V| +z_2, $$ where
$z_2  ~=~ \min \{ \frac{2}{3}|E|+\frac{1}{3}\epsilon_N-4,~
|V| +  \epsilon_N -5\}$.
\end{theorem}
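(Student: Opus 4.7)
The plan is to exhibit two explicit signed circuit covers, whose lengths give the two terms inside $\min$, and then take whichever is shorter. By switching I may assume that $G$ has exactly $\epsilon_N$ negative edges, forming a set $N$. Because every signed circuit is either a balanced cycle or a \emph{generalized barbell}---two edge-disjoint unbalanced cycles joined by a (possibly trivial) path---every cover must pair or chain the negative edges into barbells and cover the rest with balanced circuits.

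For the bound $|V| + 2\epsilon_N - 8$ (resp.\ $|V| + \epsilon_N - 5$ in case (2)), I would fix a well-chosen spanning tree $T$ of the underlying graph and pair the $\epsilon_N$ negative edges through $T$, producing generalized barbells whose connecting paths lie inside $T$; each non-tree edge is then absorbed into the fundamental signed circuit it forms with $T$. Careful bookkeeping should show that each tree edge is used at most three times (only twice in case (2), via an Eulerian-style argument that uses $2$-edge-connectivity together with the evenness of $\epsilon_N$), which yields the $3|V|$ (resp.\ $2|V|$) contribution to the bound, plus a linear-in-$\epsilon_N$ correction accounting for barbell traversals. For the bound $\tfrac{2}{3}|E| + \tfrac{4}{3}\epsilon_N - 7$ (resp.\ $\tfrac{2}{3}|E| + \tfrac{1}{3}\epsilon_N - 4$), I would instead contract the negative edges (or the unbalanced cycles carrying them) and apply a flow-based shortest-cover bound to the resulting ordinary graph, for instance the $\tfrac{4}{3}|E|$ bound for graphs admitting a nowhere-zero $4$-flow, guaranteed via Seymour's $6$-flow or Jaeger's $8$-flow theorem. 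The cover is then lifted back to $G$ by splicing in a barbell at each contracted negative edge, with total extra length linear in $\epsilon_N$.

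The most delicate step will be the tree-based construction: I must arrange the pairing of negative edges along $T$ so as to minimize the collective reuse of $T$, and must handle separately the parity of $\epsilon_N$. When $\epsilon_N$ is odd, one unmatched negative edge forces an additional barbell that reuses part of $T$, which is the source of the weaker constants in case (1); when $\epsilon_N$ is even and $G$ is $2$-edge-connected, a parity argument on the closed tree walk should let an entire traversal of $T$ be saved, yielding $2|V|$ rather than $3|V|$ in case (2). The precise constants $-7,-8,-4,-5$ come from a small-configuration case analysis of extremal barbells, short ears, and negative loops, which is routine once the general framework is set up; the conceptual difficulty lies in choosing the spanning tree $T$ so that both the pairing of $N$ and the fundamental circuits through $T$ can be controlled simultaneously.
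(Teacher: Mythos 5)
Your proposal correctly names the two ingredients that produce the two terms of the $\min$ (Fan's $|E|+|V|-1$ bound and the flow-based $\tfrac{5}{3}|E|$ bound), but the construction you build around them has genuine gaps and is not the paper's route. The paper does not run two separate constructions: it proves a single decomposition lemma, $E(G)=E(G_1)\cup E(G_2)$, where $G_1$ is a \emph{bridgeless, all-positive} subgraph (namely $G$ minus its bridges and minus every edge lying in a $2$-edge-cut together with a negative edge) and $G_2$ is a subgraph with $G_2-E_N(G)$ acyclic admitting a signed circuit $\{1,\dots,k\}$-cover, $k=2$ in the even g-bridgeless case and $k=3$ otherwise. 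Both terms of the $\min$ then come from applying the two classical unsigned bounds to the \emph{same} subgraph $G_1$. The entire difficulty sits in the cover of $G_2$, i.e.\ of the negative edges, the bridges of $G$, and the sets $S_G(e)$ of $2$-edge-cut partners of negative edges: these edges cannot be covered by positive balanced circuits, so one must organize unbalanced circuits into barbells with multiplicity at most $3$ (at most $2$ in case (2)). That is exactly where the paper needs its generalized-barbell machinery (signed circuit double covers of eulerian signed graphs with an even number of negative edges, the $\{1,2\}$-cover lemma) and a delicate minimal-counterexample induction when $\epsilon_N$ is odd. Your sketch compresses this theorem-sized step into ``careful bookkeeping should show that each tree edge is used at most three times,'' and in particular never addresses bridges of $G$ (which exist, since $G$ is only s-bridgeless) nor the fact that the fundamental circuit of a non-tree edge with your spanning tree may be unbalanced and hence not by itself a signed circuit.

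The second concrete problem is the contraction-and-lift argument for the $\tfrac{2}{3}|E|$-type term. Contracting the negative edges (or the unbalanced cycles carrying them) and covering the quotient by circuits does not lift to a signed circuit cover: a circuit through a contracted vertex lifts to a closed walk that need not be a balanced circuit or a barbell, and ``splicing in a barbell at each contracted negative edge'' requires a second unbalanced circuit plus a connecting path whose length is bounded only by $|V|$, not by a constant, so the claimed ``total extra length linear in $\epsilon_N$'' is unjustified and in general false. The paper sidesteps this entirely by applying the $\tfrac{5}{3}|E(G_1)|$ bound to the positive bridgeless subgraph $G_1$ and charging all costs attached to negative edges to the separate cover of $G_2$, whose size is controlled by $|E(G_2)|\le |V|-1+\epsilon_N$ because $G_2-E_N(G)$ is acyclic.
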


Theorem~\ref{TH: main result} is an analog of a result
(Theorem~\ref{CCCC})
by Fan
\cite{FanJCTB1998} that solves a long standing open problem by
Itai and Rodeh
\cite{Itai1978}.

Note that, in a connected s-bridgeless signed graph $G$ with $|E_N(G)|=\epsilon_N$, $G-E_N(G)$ is a connected unsigned graph (by Lemma \ref{cut}), and hence $|E|\ge \epsilon_N+|V|-1$. Therefore  Theorem~\ref{TH: main result} implies that if $G$ is an s-bridgeless signed graph with $\epsilon_N > 0$, then
$$SCC(G)\leq \frac{14}{3}|E|-\frac{5}{3}\epsilon_N-4.$$
This is an analog of a result (Theorem~\ref{CC}) by Alon and Tarsi \cite{Alon1985SIAM} and by
Bermond, Jackson and Jaeger \cite{Bermond1983JCTB}.

\section{Notation and terminology for signed graphs}
\label{SS: Notation}
A  {\it signed graph} is a graph $G$ with a mapping $\sigma: E(G)\to \{1,-1\}$. An edge $e\in E(G)$ is {\em positive} if $\sigma(e)=1$ and {\em negative} if $\sigma(e)=-1$.
The mapping $\sigma$, called {\em signature}, is usually  implicit in the notation of a signed graph and will be specified only when needed.
\label{P: implicit}
 For a subgraph $H$ of $G$, we use $E_N(H)$ to denote the set of all negative edges in $H$.
 A circuit $C$ of $G$ is {\em balanced} if $|E_N(C)|\equiv 0 \pmod 2$, and {\em unbalanced} otherwise.
 A {\em signed circuit} of $G$ is a subgraph of one of the following three types:
\begin{itemize}
 \item[(1)] a balanced circuit;

  \item[(2)] a short barbell, the union of two unbalanced circuits that meet at a single vertex;

   \item[(3)] a long barbell, the union of two disjoint unbalanced circuit with a path that meets the circuits only at its ends.
\end{itemize}
A {\em barbell} is either a short barbell or a long barbell. The {\em length} of a signed circuit $C$ is the number of edges in  $C$.

 \begin{definition}\label{SCC}
 Let $\mathcal{F}$ be a family of signed circuits of a signed graph $G$ and $K$ be a set of some nonnegative integers.
 \begin{itemize}
\item  $\cal F$ is called a {\em signed circuit cover} (resp., signed circuit $K$-cover) of $G$ if each edge $e$ of $G$ belongs to $k_e$ members of $\cal F$ such that $k_e\ge 1$ (resp., $k_e\in K$). In particular, a signed circuit $\{2\}$-cover is also called a {\em signed circuit double cover}.

\item The {\em length}, denoted by $\ell(\cal F)$, of $\cal F$ is the total length of signed circuits in $\cal F$.

\item  $\cal F$ is called a {\em shortest circuit cover} of $G$ if it is a signed circuit cover of $G$ with minimum length.
 The length of a shortest circuit cover of $G$ is denoted by $SCC(G)$.

 \end{itemize}
\end{definition}

  Clearly,  the signed circuit cover of signed graphs is a generalization of the classic circuit cover of graphs.
  By the definition of signed circuit cover, a signed graph has a signed circuit cover if and only if every edge of the signed graph is contained in a signed circuit. Such signed graph is called  {\em s-bridgeless}.

In a signed graph, {\it switching} at a vertex $u$ means reversing the signs of all edges incident with $u$.  It is obvious
(see \cite{Raspaud2011JCTB})
that  the switching operation preserves signed circuits and thus the existence and the length of a signed circuit cover of a signed graph are two invariants under the switching operation.

 \begin{definition}\label{mini-neg-edge}
Let $G$ be a signed graph, and $\mathcal{X}$ be the collection of signed graphs obtained from $G$ by a sequence of switching operations.  The
{\em negativeness} of $G$ is
$$\epsilon_N(G)=\min\{|E_N(G')| : \forall  G'\in \mathcal{X}\}.$$
  \end{definition}

 \begin{definition}
Let $b$ be a bridge of a connected signed graph $G$ and $Q_1, Q_2$ be the two components of $G-b$. The bridge $b$ is called a {\em g-bridge} of $G$ if $\epsilon_N(Q_1)\equiv \epsilon_N(Q_2)\equiv 0 \pmod 2$.
 \end{definition}

Note that a signed graph $G$ is g-bridgeless if and only if every component of $G$ contains no g-bridges, and is s-bridgeless if and only if for each component $Q$ of $G$, $Q$ is s-bridgeless and $\epsilon_N(Q)\neq 1$ (the ``only if " part is proved in \cite{Bouchet1983JCTB} and the ``if'' part is easy).

\section{Lemmas and outline of the proofs}

Since the concept of g-bridge is introduced in Section~\ref{SS: Notation}, the part (2) of
Theorem~\ref{TH: main result} can be revised as follows in a slightly stronger version.

\begin{theorem}
\label{TH: main result g}
Let $G$ be an s-bridgeless signed graph with negativeness $\epsilon_N > 0$.

(1)  In general,
$$SCC(G) ~ \leq ~ |E| + 3|V| +z_1$$ where
$z_1  ~=~ \min \{ \frac{2}{3}|E|+\frac{4}{3}\epsilon_N-7,~
|V| + 2\epsilon_N -8\}$.

(2) If $G$
is
  g-bridgeless and $\epsilon_N$ is even,
  then
$$SCC(G) ~ \leq ~ |E| + 2|V| +z_2$$ where
$z_2 ~=~ \min \{ \frac{2}{3}|E|+\frac{1}{3}\epsilon_N-4,~
|V| +  \epsilon_N -5\}$.
\end{theorem}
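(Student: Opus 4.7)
The plan is to combine two classical ideas: Fan's approach for the unsigned case (Theorem~\ref{CCCC}), which supplies the $|E|+c|V|$ main term, and a Bermond\,--\,Jackson\,--\,Jaeger style $4$-flow/parity-subgraph argument, which supplies the $\frac{2}{3}|E|$ part of $z_i$. After switching to realize $|E_N(G)|=\epsilon_N$, the structural lemma recalled in the excerpt (Lemma~\ref{cut}) gives that $G-E_N(G)$ is a connected unsigned subgraph, so one may take a spanning tree $T$ of $G-E_N(G)$. With respect to $T$, every positive chord has a balanced fundamental circuit, while every negative edge has an unbalanced fundamental circuit that must be combined with a second unbalanced circuit through a tree path in order to become a valid signed circuit (a short or long barbell).

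First I would handle the unbalanced part: pair the $\epsilon_N$ negative edges and join each pair by a short path in $T$ to form a barbell, included once in the cover. Under the part~(2) hypotheses (g-bridgeless and $\epsilon_N$ even) this pairing is clean, every negative edge lies in exactly one barbell, and one gets the sharper main term $|E|+2|V|$ with the smaller $z_2$. Under the weaker part~(1) hypothesis, either $\epsilon_N$ is odd, or the g-bridgeless condition fails, leaving some negative structure that cannot be absorbed into a standard barbell; the fix is a \emph{generalized barbell} (hinted at in the keywords) that picks up one extra tree path and one extra unbalanced circuit, explaining the jump from $2|V|$ to $3|V|$ in the main term and from $\frac{1}{3}\epsilon_N$ to $\frac{4}{3}\epsilon_N$ inside $z_1$. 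The positive chords are then covered by layering their fundamental balanced circuits onto a double traversal of $T$, in the spirit of Fan's construction, contributing the $|E|+c|V|$ main part.

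The two expressions inside each $\min\{\cdot,\cdot\}$ correspond to two estimates of the same cover. The $|V|$-type branch $|V|+2\epsilon_N-8$ (resp.\ $|V|+\epsilon_N-5$) comes directly from the observation that the union of all barbell-connecting paths is a subforest of $T$ and therefore has total length at most $|V|-1$, plus a per-negative-edge surcharge for the doubled fundamental unbalanced circuits. The $\frac{2}{3}|E|$-type branch comes from a Bermond\,--\,Jackson\,--\,Jaeger style averaging: the underlying graph admits three parity subgraphs whose union double-covers $E$, so the shortest of them has length at most $\frac{2}{3}|E|$, and this shortest parity subgraph is what plays the role of the ``doubled'' structure absorbed into the cover.

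I expect the main obstacle to be compatibility: the Fan-style doubled-tree construction and the $4$-flow parity-subgraph construction both need to be engineered so that the closed walks they produce are genuine signed circuits rather than arbitrary even subgraphs. The delicate point will be ensuring that every unbalanced closed walk appearing in either construction is matched, via a tree path, with another unbalanced closed walk (or absorbed into a generalized barbell in the odd case of part~(1)); only then does the cover qualify as a signed circuit cover. Doing this while simultaneously attaining both branches of the minimum defining $z_i$, and pinning down the precise constants $-7,-8$ and $-4,-5$, will require a careful amortised bookkeeping of how negative edges are distributed across the tree and the parity subgraphs.
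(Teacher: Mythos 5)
Your high-level reading of where the two branches of the minimum come from is essentially right: the paper does obtain them by applying Theorem~\ref{CC} and Theorem~\ref{CCCC} to an unsigned bridgeless piece of $G$, and it does handle the negative edges via barbells built from fundamental circuits and tree paths, with worse multiplicity (hence the extra $|V|$ and the larger coefficient of $\epsilon_N$) when $\epsilon_N$ is odd or $G$ has g-bridges. But the proposal omits the step that makes all of this work, namely the decomposition of Lemma~\ref{LE: a pair}: one must exhibit subgraphs $G_1,G_2$ with $E(G_1)\cup E(G_2)=E(G)$ such that $G_1$ is \emph{bridgeless} and all-positive (so that Theorems~\ref{CC} and~\ref{CCCC} apply to it), while $G_2-E_N(G)$ is \emph{acyclic} and $G_2$ has a signed circuit $\{1,\dots,k\}$-cover with $k=2$ or $3$. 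Deleting $E_N(G)$ (or the non-tree positive edges) does not in general leave a bridgeless graph; the paper must also remove every bridge of $G$ and every edge lying in a $2$-edge-cut with a negative edge, and then prove that all removed edges can be covered inside $G_2$ with multiplicity at most $k$. The acyclicity of $G_2-E_N(G)$ and the multiplicity bound are exactly what yield $|E(G_1)|+|E(G_2)|\le |E|+|V|-1$, $|E(G_2)|\le |V|-1+\epsilon_N$ and $\ell(\mathcal{F}_2')\le k|E(G_2)|-2(k-1)$, from which the precise constants $-7,-8,-4,-5$ fall out mechanically; your sketch has no substitute for this bookkeeping, and you acknowledge you cannot pin the constants down.

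Second, ``pair the negative edges and join each pair by a tree path to form a barbell'' fails as stated: the fundamental circuits $C_e$ and $C_f$ of two negative edges may share vertices and edges, so their union plus a path need not be a signed circuit. The paper's fix is the generalized-barbell machinery: it takes the symmetric difference $\triangle_{e\in E_N}C_e$, adds symmetric differences of connecting paths, and proves (Lemmas~\ref{SCDC of eulerian graph}--\ref{1,2-cover of g-barbell}) that the result admits a signed circuit double cover. Your account of the odd/$s$-bridgeless case is also wrong in mechanism: the jump from $2|V|$ to $3|V|$ does not come from ``one extra tree path and one extra unbalanced circuit'' but from the fact that $G_2$ then only admits a $\{1,2,3\}$-cover instead of a $\{1,2\}$-cover (Lemma~\ref{LE: s-bridgeless}, a separate induction exploiting $2$-edge-cuts through negative edges), which adds one more copy of $|E(G_2)|\le |V|-1+\epsilon_N$ to the total. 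As written, the proposal is a plausible plan whose two hardest steps --- the $G_1/G_2$ decomposition and the generalized-barbell covers --- are precisely the ones left unresolved.
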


The following is the major lemma for
 the proof of
Theorem~\ref{TH: main result g}.

\begin{lemma}
\label{LE: a pair}
Let $G$ be an s-bridgeless
 signed graph with $|E_N(G)|=\epsilon_N(G)$.
Then $G$ has a pair of subgraphs $\{ G_1, G_2 \}$ such that

(1) $E(G_1) \cup E(G_2) = E(G)$,

(2) $G_1$ contains no negative edge and is bridgeless, and

(3) $G_2-E_N(G)$ is acyclic and $G_2$ has a signed circuit
$\{1,2,  \cdots , k\}$-cover, where $k=2$ if $G$ is g-bridgeless  with an even negativeness, and $k=3$ otherwise.
\end{lemma}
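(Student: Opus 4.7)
The plan is to construct the pair $(G_1,G_2)$ explicitly. Let $H := G - E_N(G)$; by Lemma~\ref{cut} together with the hypothesis $|E_N(G)|=\epsilon_N(G)$, the graph $H$ is connected. Let $T$ be the set of bridges of $H$ and set $G_1 := H - T$. Then $G_1$ is positive and bridgeless: each edge of $G_1$ lies in some circuit of $H$, and any such circuit avoids $T$ (circuits contain no bridges), so it survives as a circuit in $G_1$. This yields condition (2). Now fix a spanning tree $F$ of $H$ (necessarily $T\subseteq F$), choose a subforest $F'\subseteq F$ with $T\subseteq F'$, and set $G_2 := F'\cup E_N(G)$. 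Then $E(G_1)\cup E(G_2)\supseteq (H-T)\cup T\cup E_N(G)=E(G)$, confirming (1), while $G_2-E_N(G)=F'$ is acyclic.

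The substantive task is the construction of the signed circuit $\{1,2,\dots,k\}$-cover of $G_2$. For each chord $e\in E_N(G)$, its fundamental circuit $C_e$ in $F\cup\{e\}$ is an unbalanced circuit of $G_2$ (one negative edge together with the tree path between its endpoints). I plan to pair the chords and, for each pair $\{e_i,e_j\}$, build a signed circuit of $G_2$ containing both chords via one of two templates: a \emph{balanced circuit} obtained from the symmetric difference $C_{e_i}\oplus C_{e_j}$ (valid when the two fundamental circuits share a subpath of positive length, giving a single circuit with two negative edges), or a \emph{long or short barbell} built from $C_{e_i}$ and $C_{e_j}$ joined by a tree path in $F'$ (valid when the two circuits are vertex-disjoint or share a single vertex). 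When $\epsilon_N$ is even, a perfect pairing is selected yielding a $\{1,2\}$-cover ($k=2$); when $\epsilon_N$ is odd, one chord is left unpaired and incorporated into two barbells with two different partners, raising the multiplicity to $3$ on a few edges ($k=3$).

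The main obstacle is showing that the pairing together with the template choices can be arranged to cover every edge of $G_2$ at least once while respecting the multiplicity bound $k$. The delicate cases are the following: a tree edge lying in exactly one fundamental circuit $C_e$ must be covered by a barbell involving $C_e$ and a partner whose fundamental circuit avoids that edge (a symmetric-difference circuit would cancel it out); a tree edge $t\in T$ that happens to be a bridge of $G$ itself lies on no fundamental circuit at all, and must therefore sit on the connecting path of a long barbell whose two unbalanced circuits live on the two sides of $t$, whose existence is guaranteed by $G$ being s-bridgeless (both components of $G-t$ have odd negativeness and hence carry an unbalanced circuit); and the pair choices have to be globally coordinated so that no edge is covered more than $k$ times. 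The subforest $F'$ is chosen to consist of precisely the tree edges demanded by this cover construction---essentially a Steiner subforest of $F$ spanning $T$ and the endpoints of the chords, enlarged by the connecting paths required for long barbells covering the bridges of $G$ in $T$. In the even-$\epsilon_N$ case the parity permits a globally consistent perfect pairing, giving multiplicity bound $2$; in the odd case the parity obstruction is absorbed by a single chord used in two barbells, yielding multiplicity bound $3$.
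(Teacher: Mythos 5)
Your construction of $G_1$ is essentially the paper's: by Lemma~\ref{cut}, the fundamental cut of a tree edge $t$ of $G-E_N(G)$ that is a bridge of $G-E_N(G)$ contains at most one negative edge, so the bridges of $G-E_N(G)$ are exactly $B(G)\cup\bigl(\cup_{e\in E_N(G)}(S_G(e)-\{e\})\bigr)$, and your $G_1$ coincides with the paper's $G_1=G-B(G)-(\cup_{e}S_G(e))$. The gap is in part (3), which is where essentially all of the paper's work lies. You correctly identify the obstacle (globally coordinating the pairing and the template choices so that every edge of $G_2$ is covered at least once and at most $k$ times) but you do not resolve it, and the pairwise scheme as described does not resolve it: a tree edge $f$ of $F'$ that is not a bridge of $G-E_N(G)$ can lie on the fundamental paths of arbitrarily many negative chords, and if three or more of the chosen pairs each contribute a circuit through $f$ (e.g.\ each pair consists of one chord whose fundamental path uses $f$ and one whose path avoids it, so no cancellation occurs), the multiplicity of $f$ exceeds $k$. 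Nothing in your sketch rules this out, and "the parity permits a globally consistent perfect pairing" is an assertion of exactly the statement that needs proof.

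The paper avoids this by abandoning pairwise templates in favor of a single global symmetric difference: it forms $H'=\Delta_{e\in E_N}C_e$ (so every edge appears with multiplicity $0$ or $1$ by construction), pairs up the components of $H'$ with odd numbers of negative edges by tree paths $P_i$, and takes $H''=H'\cup(\Delta_i P_i)$, which is a \emph{generalized barbell} (Lemma~\ref{LE: g-bridgeless}); coverage of $B_g$ and of the sets $S_H(e)$ is then a parity argument, and the multiplicity bound $2$ comes from the signed circuit double cover of generalized barbells (Lemmas~\ref{SCDC of eulerian graph} and~\ref{SCDC of g-barbell}), not from any pairing. Your odd case is also much too optimistic: the paper does not simply reuse one chord in two barbells, but runs a minimal-counterexample argument (Lemma~\ref{LE: s-bridgeless}) that splits at bridges using negative loops (which must then be covered \emph{exactly} twice to allow regluing --- a constraint absent from your sketch), reduces to the $2$-edge-connected case, and in the odd subcase locates a special negative edge $e$ such that all other negative edges lie in one $2$-edge-connected block of $H-S_H(e)$ (Claims~\ref{claim4} and~\ref{claim3}), covering that block by a generalized-barbell double cover and $S_H(e)$ by one additional signed circuit. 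As it stands, your proposal is a correct reduction plus a plan whose central step is both unproved and, in the form stated, false without further structural input.
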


Lemma~\ref{LE: a pair} will be proved
 in Section~\ref{SS: main proof}
 after  some preparations in Section~\ref{SS: Barbell}.

The main result, Theorem~\ref{TH: main result g},
 will be proved as a corollary of
 Lemma \ref{LE: a pair}
  in Section~\ref{SS: final proof}.
The following is the outline of the proof.
By (1) of
Lemma~\ref{LE: a pair},
$$SCC(G) ~ \leq ~ SCC(G_1)+ SCC(G_2).$$
Lemma~\ref{LE: a pair}-(3) provides an estimation for $SCC(G_2)$.
For the bridgeless unsigned
 subgraph $G_1$, we use the following
 classical results in graph theory.

\begin{theorem}
\label{CC}
{\rm (Alon and Tarsi \cite{Alon1985SIAM},
Bermond, Jackson and Jaeger \cite{Bermond1983JCTB}}
Let $G$ be a $2$-edge-connected graph. Then $SCC(G)\leq \frac{5}{3} |E|$.
\end{theorem}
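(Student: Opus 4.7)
The plan is to derive Theorem~\ref{CC} from flow-theoretic decompositions, following the Bermond--Jackson--Jaeger route via Jaeger's 8-flow theorem (an analogous argument via Seymour's 6-flow theorem works equally well).

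First, I would invoke Jaeger's 8-flow theorem: every 2-edge-connected graph $G$ admits a nowhere-zero $\mathbb{Z}_2^3$-flow, equivalently, there exist three even subgraphs $F_1, F_2, F_3$ with $F_1 \cup F_2 \cup F_3 = E(G)$. Under symmetric difference these span seven nonzero even subgraphs $H_\alpha$ indexed by $\alpha \in \mathbb{Z}_2^3 \setminus \{0\}$: an edge $e$ with flow value $v_e$ belongs to $H_\alpha$ iff the inner product $v_e \cdot \alpha = 1$, and so each edge lies in exactly four of the seven $H_\alpha$'s.

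Second, for any triple $\{H_\alpha, H_\beta, H_\gamma\}$ whose indices form a basis of $\mathbb{Z}_2^3$, the three subgraphs together cover $E(G)$ (since every nonzero $v$ is nonorthogonal to at least one element of a basis), and since each $H_\cdot$ decomposes into edge-disjoint circuits, they give a circuit cover of length $|H_\alpha|+|H_\beta|+|H_\gamma|$. Classifying edges by $v_e$ (seven types, with counts $n_v$) and summing the length over all $28$ bases yields a grand total of $48|E|$, hence a uniform average of $\frac{12}{7}|E|$ per basis. This already bounds $SCC(G)$ by $\tfrac{12}{7}|E|$, but still overshoots $\tfrac{5}{3}|E|$.

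Third, to close the gap I would tighten the argument either by an asymmetric (biased) averaging scheme that favors triples avoiding the ``parity'' subgraph $H_{(1,1,1)}=F_1\triangle F_2\triangle F_3$ when few triply-covered edges are present, or equivalently by the 6-flow viewpoint: write $E(G)=F\cup F'$, where $F$ is an even subgraph and $F'$ supports a nowhere-zero $\mathbb{Z}_3$-flow (hence a 4-flow). Applying the Bermond--Jackson--Jaeger 4-flow bound inside $F'$ gives $SCC(F')\leq \tfrac{4}{3}|F'|$, and since $F$ itself is a disjoint union of circuits of total length $|F|$, we obtain
\[
SCC(G)\leq |F|+\tfrac{4}{3}|F'|.
\]
The remaining task is to select the 6-flow so that the right-hand side is at most $\tfrac{5}{3}|E|$, which after algebra reduces to the inequality $2|F|+|F'|\geq 5|F\cap F'|$.

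The main obstacle is precisely this last flow-selection step. The obvious symmetric averages (over all $28$ bases, or over all three cyclically symmetric 6-flow choices) fall just short of $\tfrac{5}{3}|E|$; one must exploit the structural freedom of the 8-flow (or 6-flow) by rerouting flow through alternating circuits to minimize the ``overlap'' $|F\cap F'|$, and then combine with auxiliary short-circuit patches for the missing edge classes. This delicate flow-reconfiguration is the technical heart of the Alon--Tarsi and Bermond--Jackson--Jaeger proofs.
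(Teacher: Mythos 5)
First, a point of orientation: the paper does not prove Theorem~\ref{CC} at all. It is quoted as a classical black-box result of Alon--Tarsi and Bermond--Jackson--Jaeger and is only \emph{applied} in Section~\ref{SS: final proof}, so your proposal is competing with the original papers rather than with an argument in this one. Your outline does correctly identify the route those papers take (Seymour's $6$-flow theorem, respectively Jaeger's $8$-flow theorem, followed by extracting a short circuit cover from the resulting even-subgraph structure), and your intermediate computations are sound: each edge does lie in exactly four of the seven nonzero $H_\alpha$, each of the $28$ bases does give a cover, the grand total is $48|E|$, and the $6$-flow split does give $SCC(G)\le |F|+\tfrac43|F'|$ via the $4$-flow bound on $F'$ and a circuit decomposition of $F$.

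The problem is that neither computation reaches $\tfrac53|E|$, and the step that would close the gap is named but not carried out --- you say so yourself. Symmetric averaging over bases is capped at $\tfrac{12}{7}|E|$ (when all seven edge classes $n_v$ are equal, every basis gives exactly $\tfrac{12}{7}|E|$, so no refinement of the averaging alone can help), and the bound $|F|+\tfrac43|F'|\le\tfrac53|E|$ is equivalent to $2|F|+|F'|\ge 5|F\cap F'|$, which an arbitrary nowhere-zero $6$-flow need not satisfy: the even part $F$ and the $\mathbb{Z}_3$-support $F'$ can overlap heavily, and you give no mechanism for choosing or modifying the flow to control $|F\cap F'|$. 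The actual proofs do not simply ``minimize the overlap''; they decompose $F'$ into two even subgraphs $B_1,B_2$ (so each edge of $F'$ lies in exactly two of $B_1$, $B_2$, $B_1\triangle B_2$) and then choose which of the three to discard jointly with how the edges of $F\cap F'$ are charged, so that doubly-covered overlap edges are paid for by $F$ rather than counted twice. That bookkeeping lemma --- or the ``flow-reconfiguration'' you allude to --- is precisely the content of the theorem; without it your argument establishes only $SCC(G)\le\tfrac{12}{7}|E|$, not $\tfrac53|E|$.
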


\begin{theorem}
\label{CCCC}
{\rm (Fan \cite{FanJCTB1998})}
Let $G$ be a $2$-edge-connected graph. Then $SCC(G)\leq |E|+|V|-1$.
\end{theorem}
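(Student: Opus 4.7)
The plan is to bound $SCC(G)$ by enlarging $G$ into an Eulerian multigraph using at most $|V|-1$ extra edge copies, and then decomposing that multigraph into circuits. The basic tool is the standard fact that every Eulerian multigraph decomposes into edge-disjoint circuits (peel off a circuit and iterate).

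First, I would fix any spanning tree $T$ of $G$ and let $S \subseteq V(G)$ be the set of vertices of odd degree in $G$; since $\sum_{v} \deg_G(v) = 2|E|$, $|S|$ is even. Then I would define $J \subseteq E(T)$ by putting a tree edge $e$ into $J$ exactly when the two components of $T-e$ each contain an odd number of vertices of $S$. A routine parity check (e.g., by rooting $T$ and inducting on subtree sizes mod $2$) shows that the set of odd-degree vertices of the subgraph $(V(G), J)$ is exactly $S$; that is, $J$ is the $S$-join supported on $T$. Since $J \subseteq E(T)$, one automatically has $|J| \le |E(T)| = |V|-1$.

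Next, I would form the multigraph $H$ obtained from $G$ by duplicating each edge of $J$. Adding the edges of $J$ flips degree parities precisely at the vertices of $S$, so every vertex of $H$ has even degree, and $H$ is Eulerian. Decompose $H$ into edge-disjoint circuits $C_1, \ldots, C_k$. Each $C_i$ is a circuit of $G$, possibly of length $2$ when formed by two parallel copies of some $e \in J$, and every edge of $G$ appears in at least one $C_i$. Hence $\{C_1, \ldots, C_k\}$ is a circuit cover of $G$ of total length
\[
\sum_{i=1}^{k} |C_i| \;=\; |E(H)| \;=\; |E|+|J| \;\le\; |E|+|V|-1.
\]

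The main subtlety is verifying that the decomposition of $H$ produces genuine circuits of $G$ rather than arbitrary closed walks in the multigraph: if a circuit of $H$ contained both copies of some $e \in J$ but had length greater than $2$, the two endpoints of $e$ would each have degree at least $3$ in that circuit, contradicting $2$-regularity. Note that the argument only uses bridgelessness of $G$ (to ensure that a circuit cover exists at all), so this proof actually establishes the bound for every bridgeless graph, not just the $2$-edge-connected case stated in the theorem.
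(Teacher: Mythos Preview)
The paper does not prove Theorem~\ref{CCCC}; it quotes it as a result of Fan, noting that it ``solves a long standing open problem by Itai and Rodeh.'' So there is no proof in the paper to compare your argument against, and the relevant question is simply whether your argument is correct. It is not.

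The gap is in the sentence ``Each $C_i$ is a circuit of $G$, possibly of length $2$ when formed by two parallel copies of some $e\in J$.'' A digon in $H$ made of the two copies of a \emph{single} edge $e$ is a circuit of the auxiliary multigraph $H$, but it is not a circuit of $G$: projected to $G$ it is the closed walk $u\,e\,v\,e\,u$, which uses $e$ twice and contains no cycle. Your ``main subtlety'' paragraph correctly shows that a circuit of $H$ of length at least $3$ never uses both copies of an edge, but it does nothing to exclude these bad digons, and it is not true that one can always choose the Eulerian decomposition of $H$ to avoid them.

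The Petersen graph already kills the argument. Take $T$ to be a Hamiltonian path; then your $J$ consists of the five alternate edges of $T$, which is a perfect matching $M$, and your bound would give $SCC\le |E|+|J|=20$. But a circuit cover of length $20$ would have to cover each edge of $M$ exactly twice and every other edge exactly once, and no such family of circuits of the Petersen graph exists: its circuits have lengths $5,6,8,9$ only, a circuit of length $\ell$ meets a perfect matching in at most $\lfloor \ell/2\rfloor$ edges, and the only multiset of lengths summing to $20$ that could reach the required total of $10$ matching-edge incidences is $\{6,6,8\}$ with every circuit alternating; a direct check shows no alternating $6$-cycle through the omitted matching edge exists. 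Hence every circuit decomposition of your $H$ for the Petersen graph contains a bad digon. What you have written is essentially the Chinese--Postman observation that motivated the Itai--Rodeh conjecture; turning it into an actual circuit cover of $G$ is the hard part, and that is what Fan's paper does.
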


\section{Signed circuit covers of generalized barbells}
\label{SS: Barbell}
 In this section, we study  signed circuit covers of generalized barbells which play an important role in the proof of Lemma \ref{LE: a pair}.

A graph is {\em eulerian} if it is connected and each vertex is of even degree.
For a vertex subset $U$ of a graph $G$, let $\delta_G(U)$ denote the set of all edges between $U$ and $V(G)-U$.
In a graph, a {\em $k$-vertex} is a vertex of degree $k$.

\begin{definition}\label{g-barbell}
A signed graph $H$ is called a generalized barbell if it contains a set of vertex-disjoint eulerian subgraphs ${\cal B}=\{B_1, \cdots, B_t\}$ such that
\begin{itemize}
\item[(1)] The contracted graph $X= H/ (\cup_{i=1}^tB_i)$ is acyclic and
\item[(2)] For each vertex $x$ of $X$ (if $x$ is a contracted vertex, then let  $B_x$ be the corresponding eulerian subgraph of ${\cal B}$;
otherwise, simply consider $E(B_x)$ as an empty set),
\begin{equation*}\label{EQ: even}
|E_N(B_x)| \equiv |\delta_H(V(B_x))|  \pmod{2}.
\end{equation*}
\end{itemize}
\end{definition}

 We first study signed eulerian graphs with  even number
  of negative edges which is a  special case of generalized barbells.

 Let $T$ be a closed eulerian trail of a signed eulerian graph. For any two vertices $u$ and $v$ of $T$, we use  $uTv$  to denote the subsequence of $T$  starting with $u$ and ending with $v$ in the cyclic ordering induced by $T$.

\begin{lemma}\label{SCDC of eulerian graph}
Every signed eulerian graph with even number of negative edges has a signed circuit double cover.
\end{lemma}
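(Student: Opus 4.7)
I will argue by induction on $|E(G)|$. For the base case, when $G$ is balanced (so after a suitable switching we may assume $E_N(G)=\emptyset$), $G$ is an ordinary Eulerian graph: take any edge-disjoint decomposition of $G$ into simple circuits $C_1,\dots,C_t$, all of which are balanced, and use the family $\{C_1,C_1,\dots,C_t,C_t\}$ as the desired signed circuit $\{2\}$-cover.

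For the inductive step, suppose $|E_N(G)|\ge 2$, and fix an edge-disjoint Eulerian circuit decomposition $\mathcal{D}$ of $G$; since $|E_N(G)|$ is even, $\mathcal{D}$ contains an even number of unbalanced circuits. The plan is to find a small family $\mathcal{S}$ of signed circuits of $G$ that covers a nonempty edge set of $G$ exactly twice and leaves a smaller Eulerian graph $G'=G-\bigcup_{S\in\mathcal{S}}E(S)$ with even negativeness, to which the induction hypothesis applies. Three sub-cases will be handled: (a) some balanced circuit $C\in\mathcal{D}$ contains $\ge 2$ negative edges, in which case I take $\mathcal{S}=\{C,C\}$; (b) two unbalanced circuits $U_1,U_2\in\mathcal{D}$ share a vertex, yielding a short barbell $B=U_1\cup U_2$, and I take $\mathcal{S}=\{B,B\}$; (c) otherwise, every balanced circuit in $\mathcal{D}$ is entirely positive and all pairs of unbalanced circuits in $\mathcal{D}$ are vertex-disjoint. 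In this last sub-case, I pick two unbalanced circuits $U_1,U_2\in\mathcal{D}$ and, using the connectedness of $G$ and the fact that each Eulerian component of $G-E(U_1)-E(U_2)$ is $2$-edge-connected, produce two ``parallel'' paths $P,P'$ (edge-disjoint, internally vertex-disjoint from each other and from $U_1\cup U_2$) sharing endpoints $u\in V(U_1)$ and $v\in V(U_2)$, and then set $\mathcal{S}=\{U_1\cup P\cup U_2,\ U_1\cup P'\cup U_2,\ P\cup P'\}$, a collection of two long barbells and one balanced circuit. In each sub-case, a routine parity check shows $G'$ is Eulerian with even negativeness and strictly fewer edges, so the inductive hypothesis applied componentwise, together with $\mathcal{S}$, yields the desired signed circuit double cover of $G$.

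The main obstacle is sub-case (c): guaranteeing the existence of the two parallel paths $P,P'$ with the precise properties required (edge-disjoint, internally vertex-disjoint from each other and from $U_1\cup U_2$, and sharing a common endpoint on each of $U_1,U_2$), especially when the relevant Eulerian component of $G-E(U_1)-E(U_2)$ fails to be $2$-vertex-connected. Handling this will require either a careful connectivity argument inside the contracted graph $G/U_1/U_2$, or a finer sub-case analysis---for instance, identifying local cut vertices and shunting the argument back to sub-cases (a) or (b) via a re-choice of $\mathcal{D}$ or of the pair $(U_1,U_2)$ so that some two unbalanced circuits are made to share a vertex. Once sub-case (c) is pinned down, the remaining verifications (Eulerianness and negativeness parities of $G'$) are straightforward.
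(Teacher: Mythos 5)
Your overall strategy (direct induction on $|E|$ via a circuit decomposition) differs from the paper's, which takes a minimal counterexample $B$, fixes a closed Euler trail $T$, proves that every segment $u_iTu_j$ between repeated vertices has an odd number of negative edges, and then either splits $B$ at a cut vertex or re-routes $T$ to contradict that parity observation. Unfortunately your version has a genuine gap exactly where you flag it: the two parallel paths in sub-case (c) need not exist, and no re-choice of $\mathcal{D}$ or of the pair $(U_1,U_2)$ rescues the construction. Take $U_1$ an unbalanced triangle on $\{1,2,3\}$, $U_2$ an unbalanced triangle on $\{4,5,6\}$, and add the four positive edges $14$, $42$, $25$, $51$. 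This graph is eulerian with even negativeness, its only circuit decomposition up to symmetry is $\{U_1,U_2,C\}$ with $C=1425 1$ balanced and all positive, the two unbalanced circuits are vertex-disjoint, and every $U_1$--$U_2$ path that is internally disjoint from $U_1\cup U_2$ is a single edge; no two of the four such edges share both endpoints. So sub-case (c) applies but $P,P'$ do not exist. (A double cover does exist -- e.g.\ two short barbells $U_1\cup(1451)$ and $U_2\cup(1241)$ together with the balanced circuits $23152$ and $56425$ -- but it is not of the shape your $\mathcal{S}$ produces.) Filling this hole is the entire content of the lemma, and it is what the paper's trail-splitting and trail-reversal surgery accomplishes.

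A second, quieter gap: in each sub-case you delete $E(\mathcal{S})$ and apply induction \emph{componentwise}, checking only that the \emph{total} negativeness of $G'$ is even. But a component of $G'$ can have odd negativeness (the removed circuits can separate two odd pieces), and then the induction hypothesis does not apply to it. The paper confronts the same difficulty when it splits $B=B_1\cup B_2$ at a cut vertex with $|E_N(B_k)|$ odd, and resolves it by adding a negative loop to each piece to restore even parity, then splicing the two barbells through those loops back together. You would need an analogous device (or an argument that the components' parities are even) before the inductive step is sound. Separately, in sub-case (b) note that two edge-disjoint unbalanced circuits sharing two or more vertices do not form a short barbell, and in sub-case (c) you must also ensure $P\cup P'$ is balanced; both are repairable but need to be said.
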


\begin{proof}
Let $B$ be a counterexample to Lemma~\ref{SCDC of eulerian graph} with $|E(B)|$ minimum.
Then the maximum degree of $B$ is at least $4$  otherwise $B$ is a balanced circuit. By the minimality of $B$,   $B$  cannot be decomposed into two signed eulerian subgraphs, each  contains an even number of negative edges. Thus we have the following observation

{\bf Observation.}  For any eulerian trail $T=u_1e_1u_2 e_2\cdots u_m e_m u_1$ of $B$ where $m = |E(G)|$ and for any two integers $i, j\in [1,m]$ with  $i < j$ and  $u_i=u_j$,
{\em $u_iTu_j$ is a signed eulerian graph with  odd number of negative edges. }

Pick an arbitrary eulerian trail  $T=u_1e_1u_2 e_2\cdots u_m e_m u_1$.
We consider the following two cases.

{\bf Case 1.} For any two integers $i\neq j\in [1,m]$,  if $u_i=u_j$, then $|j-i|\equiv1 \pmod m$.

In this case, the resulting graph obtained from $B$ by deleting all loops is either a single vertex or a circuit.
Since $B$ has an even number of negative edges, one can check that $B$ has a signed circuit double cover, a contradiction.

{\bf Case 2.}  There are  two integers $i, j \in [1,m]$ such that   $ 2 \leq j -i \leq m-2$ and $u_i=u_j$.

 Let $B_1=u_iTu_j$ and $B_2=u_jTu_i$.
 Then, by Observation, both $B_1$ and $B_2$ are signed eulerian subgraphs of $B$ with $B=B_1\cup B_2$
 such that $|E(B_k)| \geq 2$ and $|E_N(B_k)|\equiv 1 \pmod 2$ for each $k = 1,2$.

 If $V(B_1)\cap V(B_2)=\{u_i\}$, then for each $k=1,2$,  let $B_k'$  be the resulting graph obtained from $B_k$ by adding a negative loop $e_k'$  at $u_i$.
 Clearly, $B_k'$ remains eulerian, $|E(B_k')| < |E(B)|$, and $|E_N(B_k')|$ is even.  By the minimality of $B$, $B_k'$ has a signed circuit double cover $\mathcal{F}_k$.
 Since $e_k'$ is a negative loop of $B_k'$, it is covered by two barbells, say $C_k^1$ and $C_k^2$, in $\mathcal{F}_k$.
 Let $C^{\ell}=\cup_{k=1}^2(C_k^{\ell}-e_k')$ for each $\ell=1, 2$.
 Since $V(B_1)\cap V(B_2)=\{u_i\}$, both $C^1$ and $C^2$ are two barbells of $B$, and so $B$ has a signed circuit double cover
 $\cup_{k=1}^2(\mathcal{F}_k-\{C_k^1,C_k^2\})\cup \{C^1, C^2\}$, a contradiction.

 If $V(B_1)\cap V(B_2)\neq \{u_i\}$, then there are two integers $s$ and $t$ such that $s\in[i,j]$, $t\notin [i,j]$, and $u_s=u_t$.  By Observation,
 $|E_N(u_sTu_t)| \equiv 1 \pmod 2$.
 Let $T^*$ be a new closed eulerian trail of $B$ obtained from $T$ by reversing the subsequence $u_iTu_j$ in $T$.  Then $E(u_sT^*u_t)$ is the disjoint union of $E(u_iTu_s)$ and $E(u_jTu_t)$ and thus $E_N(u_sT^*u_t)$ is the disjoint union of $E_N(u_iTu_s)$ and $E_N(u_jTu_t)$.
 Since $|E_N(u_iTu_j)| \equiv 1 \pmod 2$ and $|E_N(u_sTu_t))|\equiv 1 \pmod 2$,  $|E_N(u_iTu_s)| \equiv |E_N(u_jTu_t)| \pmod 2$. Therefore
  $|E_N(u_sT^*u_t)|\equiv 0 \pmod 2$,
 a contradiction to Observation.
 This completes the proof of the lemma.
 \end{proof}

The following lemma is a generalization of Lemma \ref{SCDC of eulerian graph}.

\begin{lemma}
\label{SCDC of g-barbell}
Every generalized barbell has a signed circuit double cover.
\end{lemma}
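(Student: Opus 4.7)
I would proceed by strong induction on $|E(H)|$. The base case $E(X) = \emptyset$ follows from Lemma~\ref{SCDC of eulerian graph}: each component of $H$ is a single eulerian block $B_j$, and the parity condition with $\deg_X(b_j) = 0$ forces $|E_N(B_j)|$ to be even, so Lemma~\ref{SCDC of eulerian graph} applies to each component. For the inductive step, observe that every leaf of $X$ must be a contracted vertex lying in $T := \{b_j : |E_N(B_j)| \text{ odd}\}$, since a non-contracted leaf would have odd $X$-degree and violate the parity condition; hence $|T| \ge 2$ whenever $E(X) \ne \emptyset$.

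I would then choose $b_1, b_2 \in T$ at minimum $X$-distance, let $P_H$ be the $H$-path realizing the unique $X$-path between them, with endpoints $u_i \in V(B_i)$, and locate an unbalanced circuit $C_i \subseteq B_i$ together with a (possibly trivial) internal path $Q_i \subseteq B_i - E(C_i)$ from $u_i$ to a vertex of $C_i$. Then $B^* := C_1 \cup Q_1 \cup P_H \cup Q_2 \cup C_2$ is a long barbell, hence a signed circuit; placing two copies of $B^*$ in the cover accounts for every edge of $B^*$ exactly twice. The residual is then to be handled by Lemma~\ref{SCDC of eulerian graph} on each eulerian leftover inside $B_1, B_2$ (after confirming its $|E_N|$ is even) and by the inductive hypothesis on the smaller generalized barbell obtained from the remaining tree structure and the surviving eulerian blocks $B_3, \ldots, B_t$.

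The principal obstacle I anticipate is verifying that $H - E(B^*)$ genuinely decomposes into a disjoint union of smaller generalized barbells satisfying Definition~\ref{g-barbell}. Two subtleties dominate the bookkeeping: first, $B_i - E(C_i) - E(Q_i)$ may split into several connected components, each of which becomes a new contracted vertex whose parity of negative edges must individually match its new $X$-degree; second, when $b_2$ fails to be a leaf of $X$, the remaining tree edges at $V(B_2)$ must be redistributed among these new components of $B_2$. Making a sufficiently clever choice of $C_i$ and $Q_i$ (together with the new eulerian decomposition of the residual) so that every residual block is parity-compatible is, I expect, the technical heart of the argument; a secondary point is establishing existence of a suitably situated unbalanced circuit $C_i$ inside each $B_i$, which uses that $B_i$ is eulerian (hence $2$-edge-connected) with $|E_N(B_i)|$ odd.
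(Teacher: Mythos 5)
Your plan has a genuine gap, and you have located it yourself: everything hinges on showing that $H-E(B^*)$ is again a disjoint union of generalized barbells (or of eulerian graphs with evenly many negative edges), and the proposal does not carry this out. The difficulty is real, not just bookkeeping. First, the lift $P_H$ of the $X$-path from $b_1$ to $b_2$ must, whenever two consecutive tree edges attach to different vertices of an intermediate block $B_j$, traverse a subpath inside $B_j$; deleting that subpath leaves two odd-degree vertices in $B_j$, so the blocks $B_3,\dots,B_t$ do \emph{not} all ``survive'' intact, contrary to what the last sentence of your plan assumes. Second, even at the endpoints, $B_i-E(C_i)-E(Q_i)$ can split into several components whose negative-edge counts you cannot control by choosing $C_i$ and $Q_i$: e.g.\ if $|E_N(B_1)|=3$ and $C_1$ absorbs one negative edge, the remaining two negative edges may land in two different components, each then violating the parity condition $|E_N|\equiv|\delta|\pmod 2$ no matter how the surviving tree edges at $V(B_1)$ are distributed. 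A component with odd $|E_N|$ and no boundary edges is not even coverable in isolation, so the induction cannot simply be applied to the pieces. Resolving this would require a substantially stronger inductive statement or a re-decomposition of the residual, neither of which is sketched.

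For contrast, the paper avoids all of this by inducting on the number of tree edges $|E(H)-\cup_i E(B_i)|$ rather than on $|E(H)|$, and by never constructing a barbell explicitly in the inductive step: it deletes a single tree edge $uv$, attaches a negative loop at each of $u$ and $v$ (which preserves the parity condition, since $|\delta|$ drops by one exactly where $|E_N|$ rises by one), applies induction to the now-disconnected pieces, and splices the two barbells through $e_u$ with the two through $e_v$ along the restored edge $uv$. All the parity analysis is thereby pushed into the base case, which is exactly Lemma~\ref{SCDC of eulerian graph}. If you want to salvage your approach, the loop-splitting trick is the device you are missing; as written, the proposal's ``technical heart'' is an open hole rather than a proof.
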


\begin{proof}
Let $H$ be a generalized barbell. Let  $\{B_1, \cdots, B_t\}$ be a set of disjoint eulerian subgraphs of $H$
and $X = H/ (\cup_{i=1}^{t}B_{i})$ as described in Definition~\ref{g-barbell}.
We will prove by induction on $|E(H) - \cup_{i=1}^tE(B_i)|$.

If $E(H)-\cup_{i=1}^tE(B_i)=\emptyset$, then by the definition of generalized barbell,
each component of $H$ is a signed eulerian graph with an even number of negative edges.
Thus $H$ has a signed circuit double cover by Lemma \ref{SCDC of eulerian graph}.

Now assume that $E(H)-\cup_{i=1}^tE(B_i)\neq \emptyset$. Let $uv\in E(H)-\cup_{i=1}^tE(B_i)$
and $H'$ be the new signed graph obtained from $H$ by deleting $uv$ and adding negative loops $e_u$ and $e_v$ at $u$ and $v$, respectively.
By the definition,  $H'$ remains as a generalized barbell. Since $X$ is acyclic,  $H'$ has more components than $H$,
and thus by induction to each component of $H'$, $H'$ has a signed circuit double cover $\cal F'$.
Let  $\{C_u^1,C_u^2\}$ and $\{C_v^1, C_v^2\}$ be the sets of barbells in $\cal F'$ containing $e_u$ and $e_v$, respectively.
Since $e_u$ and $e_v$ belong to two distinct components of $H'$, $C^i=(C_u^i-e_u)\cup (C_v^i-e_v)+uv$ ($i=1, 2$) is a barbell in $H$.
Hence $$(\mathcal{F}'-\{C_u^1, C_u^2, C_v^1, C_v^2\})\cup \{C^1, C^2\}$$
is a signed circuit double cover of $H$.
\end{proof}

\begin{lemma}\label{g-cycle}
Let $H$ be a generalized barbell with a set of vertex-disjoint eulerian subgraphs ${\cal B}=\{B_1, \cdots, B_t\}$,
and assume that $\{B_1,\cdots, B_s\}$ $(2\leq s\leq t)$ is the set of eulerian subgraphs corresponding to the $1$-vertices of  the contracted graph $X=H/(\cup_{i=1}^tB_i)$.
If each $B_i$ $(1\leq i\leq t)$ is a circuit, then there is a family of signed circuits $\mathcal{F}$ in $H$ such that each edge $e$ of $H$ belongs to
\begin{itemize}
\item[(a)] exactly one member of $\mathcal{F}$ if $e\in \cup_{i=1}^sE(B_i)$,

\item[(b)] one or two members of  $\mathcal{F}$ if $e\in \cup_{i=s+1}^tE(B_t)$, and

\item[(c)] at most one member of  $\mathcal{F}$ if $e\in E(H)-\cup_{i=1}^tE(B_i)$.
\end{itemize}
\end{lemma}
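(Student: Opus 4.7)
The plan is to assemble $\mathcal F$ out of long barbells indexed by paths in the tree $X=H/(\cup B_i)$, together with balanced signed circuits for selected internal $B_k$'s.

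I would first exploit the parity condition in Definition~\ref{g-barbell}: since $|E_N(B_k)|\equiv d_X(B_k)\pmod 2$, the set $S$ of vertices of $X$ corresponding to unbalanced circuits is exactly the set of odd-degree vertices of $X$. Every leaf of $X$ has degree $1$, so $\{B_1,\ldots,B_s\}\subseteq S$, and $|S|$ is even by the handshake lemma applied to the tree $X$.

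Next, I would decompose $E(X)$ into $|S|/2$ edge-disjoint paths whose endpoints partition $S$, each vertex of $S$ appearing as an endpoint of exactly one path. The construction is greedy: repeatedly pick a leaf $L$ of the current tree (which always belongs to $S$, since the peeling preserves the parities of unvisited vertices), walk through the current tree until reaching the next $S$-vertex $R$, and extract the path $L\to\cdots\to R$. At every internal $B_k\notin S$ crossed by the walk, I would choose the exit tree edge to be cyclically adjacent to the entry edge among the still-unused attachments on $B_k$; this adjacency rule keeps the induced chord matching of passages at each $B_k$ non-crossing. For each extracted path $P_j$ with $S$-endpoints $B_{\alpha_j},B_{\beta_j}$ (both unbalanced), I would form the long barbell $C_j$ comprising $B_{\alpha_j}$, $B_{\beta_j}$, and the $H$-realization of $P_j$, using the tree edges of $P_j$ together with a chosen arc of $B_k$ between entry and exit attachments for each internal $B_k$ traversed. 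For each balanced internal $B_k$ (with $k>s$ and $B_k\notin S$), I would, when necessary, add $B_k$ itself to $\mathcal F$ as a balanced signed circuit.

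Conditions (a) and (c) are then immediate: each leaf $B_i$ with $i\le s$ is an endpoint of exactly one barbell, and each tree edge belongs to exactly one $P_j$ and hence exactly one barbell. The main obstacle is verifying condition (b): at each internal $B_k$, the passage arcs chosen for the barbells passing through $B_k$, combined with the single full coverage coming either from the endpoint barbell (when $B_k\in S$) or from the separately added copy of $B_k$ (when $B_k\notin S$), must leave every edge of $B_k$ with multiplicity in $\{1,2\}$. I would handle this by a case analysis on the nesting structure of the non-crossing chord matching at $B_k$ produced by the peeling: the freedom to pick either of the two arcs for each chord, together with the option to include or omit $B_k$ as a separate signed circuit, turns out to be enough to produce a valid assignment in each case.
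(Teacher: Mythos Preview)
Your plan diverges from the paper's approach: you decompose $E(X)$ into $|S|/2$ edge-disjoint paths with endpoints partitioning $S$, whereas the paper first \emph{clockwise splits} every even-degree vertex of $X$ (so every balanced $B_k$ is replaced by $d_X(B_k)/2$ degree-$2$ vertices, each later realised by one short arc of $B_k$), suppresses the resulting $2$-vertices, and then takes a spanning \emph{star forest} of the resulting odd-degree forest $\overline{X^*}$. The crucial difference is that in the paper the pairing of attachment edges at every internal circuit is \emph{chosen in advance} (adjacent pairs from the clockwise splitting at balanced $B_k$; and, at an unbalanced centre $B_c$ of a star $K_{1,r}$, any pairing of the $r$ leaves one likes). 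This freedom is exactly what makes the ``centre covered once or twice'' step go through.

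Your proposal has a genuine gap at condition~(b) for internal \emph{unbalanced} $B_k$ (those in $S$ with $d_X(B_k)\ge 5$). Here one barbell uses $B_k$ as an end-circuit (contributing a full copy of $B_k$), and the remaining $(d_k-1)/2$ passages contribute arcs; to keep every edge in $\{1,2\}$ these arcs must be pairwise edge-disjoint. But your greedy rule (``exit at the cyclically next still-unused attachment'') does \emph{not} force the adjacent matching in general: the entry points are dictated by the tree, not by you. Concretely, take $d_k=7$ with endpoint attachment $p_1$; if the passages happen to arrive in the order $p_3,\,p_2,\,p_6$, your rule produces the non-crossing matching $\{p_3,p_4\},\{p_2,p_5\},\{p_6,p_7\}$. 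For this matching \emph{no} choice of three arcs on the $7$-cycle is edge-disjoint (whichever arc you pick for $\{p_2,p_5\}$ meets either the arc for $\{p_3,p_4\}$ or the arc for $\{p_6,p_7\}$), so some edge of $B_k$ is covered at least three times once the full copy of $B_k$ is added. Your ``freedom to pick either arc'' is therefore not enough, and you cannot add $B_k$ separately since it is unbalanced. The paper sidesteps this entirely: at an unbalanced $B_c$ it pairs the $r$ attached leaves \emph{freely}, so one can always pick a pairing (e.g.\ for $r$ odd, consecutive short arcs plus one full $B_c$; for $r$ even, the pairing $(p_1,p_2),(p_3,p_r),(p_4,p_5),\dots,(p_{r-2},p_{r-1})$ with the first arc long and the second through $p_1$) that yields coverage exactly $1$ or $2$.
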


 \begin{proof}
Assume that $H$ is embedded
 in the plane and let $\overline{X^*}$ be a graph obtained from  $X$ by first  clockwise splitting each vertex $x$ with  even degree into $\frac{1}{2}d_{X}(x)$ $2$-vertices, and replacing each maximal subdivided edge  with a single edge. Then each vertex of $\overline{X^*}$ is of odd degree.
 By the definition of generalized barbell, $\overline{X^*}$ is a forest and $V(\overline{X^*})$ corresponds to the set of unbalanced circuits of $\mathcal{B}$.
 Thus $\overline{X^*}$ has  a spanning subgraph satisfying that each component is  a star graph with at least two vertices.
 Let $K_{1,r_i}$ ($i = 1,\cdots, \ell$) be all such star subgraphs.

Note that $V(\overline{X^*})=\cup_{i=1}^{\ell}V(K_{1,r_i})$ corresponds to the set of unbalanced circuits of $\mathcal{B}$.
For $1\leq i\leq \ell$, one can check that the subgraph of $H$ corresponding to $K_{1,r_i}$ has a signed circuit cover $\mathcal{F}_i$
such that each edge of the unbalanced circuits corresponding to $1$-vertices of $K_{1,r_i}$ is covered by $\mathcal{F}_i$ exactly once and each edge of the unbalanced circuit corresponding to the unique vertex of $K_{1,r_i}$ with degree $r_i\ge 2$ is covered by $\mathcal{F}_i$ once or twice.
Therefore the union of $\cup_{i=1}^{\ell}\mathcal{F}_i$ together with the set of balanced circuits of $\mathcal{B}$ is a desired family $\mathcal{F}$ of signed circuits of $H$.
 \end{proof}

Given a family of sets $\{A_1,\cdots, A_t\}$, their {\em symmetric difference}, denoted by $\Delta_{i=1}^tA_i$, is defined as the set consisting of
 elements contained in an odd number of $A_i$'s.

 The following result is stronger than Lemma~\ref{SCDC of g-barbell} which states that a generalized barbell has a signed circuit $\{1,2\}$-cover with some edges covered only once.

  \begin{lemma}\label{1,2-cover of g-barbell}
  For each  generalized barbell, it  either
\begin{itemize}
 \item[(i)]  can be decomposed  into balanced circuits, or

\item[(ii)] has  a signed circuit $\{1,2\}$-cover $\mathcal{F}$ such that there are  two edge-disjoint unbalanced circuits $C_1$ and $C_2$  whose edges are covered by  $\mathcal{F}$ exactly once.
  \end{itemize}
  \end{lemma}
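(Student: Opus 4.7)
The plan is to derive Lemma~\ref{1,2-cover of g-barbell} from Lemma~\ref{SCDC of g-barbell} by a short surgery on a signed circuit double cover.  First I apply Lemma~\ref{SCDC of g-barbell} to the generalized barbell $H$ to obtain a signed circuit double cover $\mathcal{F}_0$, so every edge of $H$ is covered exactly twice.  I then split on whether $\mathcal{F}_0$ contains a barbell.

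In the easy case, some element $\beta\in\mathcal{F}_0$ is a barbell.  By definition $\beta$ is the union of two edge-disjoint unbalanced circuits $C_1,C_2$ (joined by a connecting path in the long-barbell case).  Setting $\mathcal{F}:=\mathcal{F}_0\setminus\{\beta\}$, every edge of $\beta$ drops from multiplicity $2$ to multiplicity $1$, while every edge outside $\beta$ remains at multiplicity $2$.  Hence $\mathcal{F}$ is a signed circuit $\{1,2\}$-cover of $H$ in which $C_1,C_2$ are two edge-disjoint unbalanced circuits covered exactly once, which is precisely (ii).  In the remaining case $\mathcal{F}_0$ consists solely of balanced circuits; since bridges cannot lie on any balanced circuit and the contraction $X$ is a forest, this forces $H$ to have no tree edges, so $E(H)=\bigcup_i E(B_i)$ and $H$ is the disjoint union of the eulerian pieces $B_i$, each with $|E_N(B_i)|$ even (from the parity condition with $\delta_H(V(B_i))=\emptyset$).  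I then want to conclude (i) by decomposing each $B_i$ into balanced circuits.

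The crux is this last step: showing that a connected eulerian signed graph $B$ with even negativeness that admits an SCDC by balanced circuits must itself decompose into balanced circuits.  My plan is a parity argument.  Associate to the SCDC $\mathcal{F}_0$ the auxiliary multigraph $G^*$ whose vertices are the balanced circuits of $\mathcal{F}_0$, where for each edge $e\in E(B)$ one inserts an edge of $G^*$ joining the two circuits of $\mathcal{F}_0$ that contain $e$.  If $G^*$ turns out to be bipartite, with colour classes $S$ and $\overline{S}$, then the circuits in $S$ partition $E(B)$ (each edge of $B$ corresponds to an edge of $G^*$ between $S$ and $\overline{S}$, hence lies in exactly one circuit of $S$), giving the desired balanced-circuit decomposition of $B$ and therefore (i).  I expect the main obstacle to be verifying this bipartiteness; my fallback is an inductive peeling argument on $|E(B)|$, where I strip off a balanced circuit $C$ from the SCDC, check that each eulerian component of $B-E(C)$ still has even negativeness and inherits an SCDC by balanced circuits (so no barbell is forced to appear), and recurse.
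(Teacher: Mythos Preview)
Your reduction to the eulerian case is fine and matches the paper: if the signed circuit double cover $\mathcal{F}_0$ from Lemma~\ref{SCDC of g-barbell} contains a barbell, deleting it yields~(ii), and otherwise $H$ is bridgeless, hence $E(X)=\emptyset$ and $H=\bigcup_i B_i$ with each $|E_N(B_i)|$ even. The gap is the last step, where you assert that an eulerian signed graph $B$ with an SCDC by balanced circuits must decompose into balanced circuits. Neither of your two arguments establishes this. For the auxiliary graph $G^*$: bipartiteness of the ``circuit-intersection'' graph of a cycle double cover is exactly $2$-colourability of the CDC, which is a genuinely nontrivial property and is not implied by the mere existence of the CDC; you give no reason why it should hold here. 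For the peeling argument: after deleting $E(C)$ for some $C\in\mathcal{F}_0$, the remaining members of $\mathcal{F}_0$ still use edges of $C$, so $\mathcal{F}_0\setminus\{C\}$ is not a family of circuits of $B-E(C)$ and nothing is ``inherited''. Even if you instead invoke Lemma~\ref{SCDC of eulerian graph} afresh on each component of $B-E(C)$, two things go wrong: a component may have odd negativeness (removing a balanced $C$ keeps the total parity even but not the parity in each piece), and even when the parity is right the new SCDC produced by that lemma may well contain barbells, so your inductive hypothesis ``SCDC by balanced circuits'' is not preserved. In short, you have not ruled out the possibility that $\mathcal{F}_0$ is barbell-free while $B$ nevertheless fails~(i); in that situation you would have to produce~(ii) by some other means, and your proposal does not address this.

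The paper avoids this difficulty entirely by not working with an SCDC at the decomposition stage. Once $H$ is known to be eulerian, it takes an ordinary circuit \emph{decomposition} $\mathcal{C}$ of $H$ into unbalanced circuits, short barbells, and balanced circuits, chosen extremal with respect to three nested criteria. The extremality forces either $h=0$ (so $\mathcal{C}$ already gives~(i) or~(ii) after dropping one short barbell) or $h\ge 2$ with the unbalanced circuits pairwise vertex-disjoint; in the latter case a tree-pairing of these circuits together with Lemma~\ref{g-cycle} produces the $\{1,2\}$-cover required for~(ii). The essential difference is that the paper controls the \emph{decomposition} directly rather than trying to halve a double cover.
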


\begin{proof}
Let $H$ be a counterexample to Lemma \ref{1,2-cover of g-barbell} with $|E(H)|$ minimum.
Thus $H$ is connected. Otherwise each component of $H$ satisfies either (i) or (ii).
This implies that $H$ satisfies either (i) or (ii), a contradiction to the choice of $H$.

\begin{claim}\label{claim1}
$H$ is eulerian and therefore contains an even number of negative edges.
\end{claim}

\noindent {\em Proof of Claim \ref{claim1}.}  By the definition of generalized barbell, it is sufficient to show that  $H$ is bridgeless.  Suppose to the contrary that $H$ has a bridge.
By Lemma \ref{SCDC of g-barbell}, $H$ has a signed circuit double cover $\mathcal{F}'$.  Since $H$ has bridges, $\mathcal{F}'$ contains a barbell $C$ with two unbalanced circuits $C_1$ and $C_2$.
Then $\mathcal{F}=\mathcal{F}'-\{C\}$ is a signed circuit $\{1,2\}$-cover of $H$ and covers $C_1$ and $C_2$  exactly once, a contradiction.  This proves the claim.~$\Box$

\medskip
Since $H$ is eulerian by Claim \ref{claim1}, $H$ has a decomposition
$$\mathcal{C}=\{C_1,\cdots,C_h, C_{h+1},\cdots,C_{h+m}, C_{h+m+1},\cdots, C_{h+m+n}\},$$
 where $h, m$ and $n$ are three nonnegative integers, and each $C_i$ is an unbalanced circuit if $1\leq i\leq h$, a short barbell if $h+1\leq i\leq h+m$,
 and a balanced circuit otherwise. We choose such a decomposition  that

(a) $h+2m+n$ is as large as possible,

(b) subject to (a), $n$ is as large as possible, and

(c) subject to (a) and (b), $m$ is as large as possible.

\begin{claim}\label{claim2}
$h\ge 2$ is even and $|V(C_i)\cap V(C_j)|=0$ for $1\leq i<j\leq h$.
\end{claim}
\noindent {\em Proof of Claim \ref{claim2}.}  If $h= 0$, then $\cal C$ satisfies (i) if $m = 0$ and  $ \mathcal{C}\setminus\{C_1\}$ satisfies (ii) otherwise.  Thus $h > 0$. Since $|E_N(H)|=\sum_{i=1}^{h+m+n}|E_N(C_i)|$ is even and $|E_N(C_i)|$ is even for $h+1\leq i\leq h+m+n$, we have  that $h\ge 2$ is even.

Let $C_i$ and $C_j$ be two circuits in $\cal C$ with $1\leq i < j\leq h$. If $|V(C_i)\cap V(C_j)| \geq 3$,
then $C_i\cup C_j$ can be decomposed into three or more circuits (balanced or unbalanced), a contradiction to (a).
So $|V(C_i)\cap V(C_j)|\leq 2$. If $|V(C_i)\cap V(C_j)|=2$, then $C_i\cup C_j$ has a decomposition into two balanced circuits
since both $C_i$ and $C_j$ are unbalanced circuits, which contradicts (b).
If $|V(C_i)\cap V(C_j)|=1$, then $C_i\cup C_j$ is a short barbell, which contradicts (c). So the claim is true.~$\Box$

\medskip
Let $H'=H/(\cup_{i=1}^hC_i)$ and for $1\leq i\leq h$, let $c_i$ be the  vertex of $H'$ corresponding to $C_i$.
Let $T'$ be a spanning tree of $H'$ since $H$ is connected. By Claim \ref{claim2}, $h\geq 2$ is even.
Let  $P_j$ ($1\leq j\leq \frac{h}{2}$) be a path in $T'$ from $c_{2j-1}$ to $c_{2j}$ and let
$$F'=T'[\Delta_{j=1}^{\frac{h}{2}}E(P_j)]$$
Then $F'$ is a forest and $\{c_1,\cdots,c_h\}$ is the set of vertices of $F'$ with odd degree. By the definition, the subgraph of $H$ corresponding to $F'$ is a generalized barbell satisfying the conditions in Lemma \ref{g-cycle}, and thus, by Lemma \ref{g-cycle}, it has a family $\mathcal{F}^*$ of signed circuits such that  $\mathcal{F}=\mathcal{F}^*\cup \{C_{h+1}, \cdots, C_{h+m+n}\}$ is a signed circuit $\{1,2\}$-cover of $H$ and at least two unbalanced circuits in $\{C_1,\cdots,C_h\}$ are covered by $\mathcal{F}$ exactly once, a contradiction. This completes the proof of Lemma \ref{1,2-cover of g-barbell}.
\end{proof}


 \section{Proof of Lemma \ref{LE: a pair}}
\label{SS: main proof}

 In this section, we complete the proof of Lemma \ref{LE: a pair}. For a signed graph $G$, we use $B(G)$ to denote the set of bridges of $G$ and for each $e\in E_N(G)$,  define
 $$S_G(e)=\{e\}\cup \{f : \{e,f\} \mbox{ is a $2$-edge-cut of $G$}\}.$$

 Let $B_g(G)$ be the subset of $B(G)$ such that, for each $b\in B_g(G)$, at least one component of $G-b$ contains an odd number of negative edges, and let $B_s(G)$ be the subset of $B(G)$ such that, for each $b\in B_s(G)$, each component of $G-b$ contains negative edges.
We need the following lemmas.

\begin{lemma}\label{LE: g-bridgeless}
Let $H$ be a signed graph satisfying that $|E_N(H)|\ge 2$ and $H-E_N(H)$ is a spanning tree of $H$. If $|E_N(H)|$ is even, then $H$ has a generalized barbell containing all edges of $B_g(H)\cup (\cup_{e\in E_N(H)}S_H(e))$.
\end{lemma}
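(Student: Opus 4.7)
Proof proposal:

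The plan is to build the required generalized barbell inside $H$ directly from the spanning tree $T=H-E_N(H)$. For each tree edge $f$, let $n(f)$ denote the number of negative edges whose fundamental cycle (with respect to $T$) contains $f$, so $f$ is a bridge of $H$ iff $n(f)=0$. Set $T_{used}=\{f\in E(T):n(f)\geq 1\}$ and let $T_1,\ldots,T_r$ be its connected components in $T$. Writing $E_N^i$ for the negative edges with both endpoints in $V(T_i)$ (these partition $E_N(H)$) and $H_i:=T_i\cup E_N^i$, each $H_i$ is 2-edge-connected. Contracting each $T_i$ inside $T$ yields a tree $\bar T$ whose edges are precisely the bridges of $H$. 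A parity computation based on $\sum_{u\in V(T_v)}\deg_N(u)=2s^1(f_v)+n(f_v)$ identifies $B_g(H)$ as the unique $W'$-join of $\bar T$ with $W'=\{T_i:|E_N^i|\ \text{odd}\}$; in particular $\deg_{B_g}(T_i)\equiv|E_N^i|\pmod 2$ for every $i$.

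Next, for each $i$ I would construct an even-degree subgraph $B_i^{\sharp}\subseteq H_i$ containing $M_i:=E_N^i\cup\{f\in T_i:n(f)=1\}$. Let $W_i\subseteq V(T_i)$ be the odd-degree vertices of $M_i$. A local double-counting on each connected component $C$ of the subforest $\{f\in T_i:n(f)\geq 2\}\subseteq T_i$, using the identity $\sum_{v\in V(C)}\sum_{f\ni v,\,f\in C}n(f)=2\sum_{f\in C}n(f)$, shows that $|W_i\cap V(C)|$ is even for every such $C$. Hence the (unique) $W_i$-join $S_i$ inside $\{f\in T_i:n(f)\geq 2\}$ exists, and $B_i^{\sharp}:=M_i\cup S_i$ has every vertex of even degree; its connected components, which sit vertex-disjointly inside $V(T_i)$, will serve as eulerian subgraphs $\{B_j\}$ of the generalized barbell.

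Finally I would set $H':=\bigcup_i B_i^{\sharp}\cup B_g\cup F$, where $F$ is an auxiliary collection of tree edges inserted solely to repair parity at the lonely vertices $v\in V(T_i)\setminus V(B_i^{\sharp})$ that happen to be incident to an odd number of $B_g$-edges. Concretely, for each such deficient $v$ one adds a $T_i$-path from $v$ to $V(B_i^{\sharp})$, handling intermediate lonely vertices by a secondary matching. Taking $\{B_j\}$ of the barbell to be the connected components of $\bigcup_i B_i^{\sharp}$, I verify: (a) $M\subseteq H'$, which is immediate; (b) each $B_j$ is eulerian and the $V(B_j)$ are vertex-disjoint, each inside a single $V(T_i)$; (c) the contracted graph $X=H'/\bigcup B_j$ is acyclic, because the $B_g$-edges inject into the tree $\bar T$ after contracting the $V(B_j)$'s, and the $F$-edges lie inside a single $T_i$ and so only contribute extra edges at the corresponding $X$-vertex $b_i$; and (d) the parity condition $|E_N(B_j)|\equiv|\delta_{H'}(V(B_j))|\pmod 2$ holds at each $B_j$, using $\deg_{B_g}(T_i)\equiv|E_N^i|\pmod 2$ combined with the precise count of $F$-edges leaving $V(B_i^{\sharp})$.

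The main obstacle will be simultaneous control of (c) and (d) via the choice of $F$: one must select $F$ so that every deficient lonely vertex becomes even-degreed in $H'$, the parity at each $b_i$ matches $|E_N^i|\pmod 2$, and no cycle is created in $X$ through the refined contraction of $V(B_i^{\sharp})$ inside $T_i$. The argument should reduce to a $T$-join/matching analysis inside each $V(T_i)\setminus V(B_i^{\sharp})$ against its $B_g$-incidence pattern, exploiting the fact that $F$-edges stay inside a single $T_i$ (hence affect only $b_i$ and the lonely vertices of that $T_i$ in $X$) while $B_g$-edges link distinct $T_i$'s along the tree $\bar T$; this structural separation is what prevents the repair from closing cycles in the final $X$.
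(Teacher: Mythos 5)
Your construction gets the local pieces right (the identification of $\bigcup_{e}S_H(e)$ with $E_N(H)\cup\{f\in E(T):n(f)=1\}$, the characterization of $B_g(H)$ as the $W'$-join of $\bar T$, and the existence of the $W_i$-joins $S_i$ making each $B_i^{\sharp}$ even), but the proof has a genuine gap exactly where you flag it: the choice of $F$. You must satisfy \emph{three} constraints simultaneously, and your sketched repair (``add a $T_i$-path from each deficient lonely vertex to $V(B_i^{\sharp})$'') addresses only one of them. First, the parity condition of Definition 4.1(2) must hold at every contracted vertex $B_j$, i.e.\ $|E_N(B_j)|\equiv|\delta_{B_g}(V(B_j))|+|\delta_{F}(V(B_j))|\pmod 2$ for each component $B_j$ of $B_i^{\sharp}$ separately; the congruence $\deg_{B_g}(T_i)\equiv|E_N^i|\pmod 2$ only controls the \emph{sum} over all $B_j$ inside $T_i$ and all lonely vertices of $T_i$, so routing paths to ``$V(B_i^{\sharp})$'' without specifying which component they end at does not fix the individual parities. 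Second, acyclicity of $X$ is a real danger, not a formality: a component $B_j$ contains negative chords, so $V(B_j)$ need not induce a connected subgraph of $T_i$, and an $F$-path through lonely vertices joining two vertices of the same $B_j$ closes a cycle in $X$. Both problems are solvable at once --- the set $W$ of odd-demand vertices of $(T_i\setminus E(B_i^{\sharp}))/\{B_j\}$ has even size by the displayed congruence, that contracted graph is connected, and a \emph{minimal} $W$-join in it is a forest, which gives acyclicity of $X$ inside each $T_i$ (cycles through $B_g$-edges are excluded since $\bar T$ is a tree) --- but none of this is in your write-up, and it is the crux of the argument. A smaller issue: negative loops have empty fundamental tree-paths, so their carriers need not lie in $T_{\mathrm{used}}$ and the sets $E_N^i$ need not partition $E_N(H)$; this case has to be handled separately.

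For comparison, the paper's proof avoids the entire decomposition-and-reassembly problem by one global move: it sets $H'=\triangle_{e\in E_N(H)}C_e$, which is automatically even and contains every $S_H(e)$ (since $S_H(e)\subseteq E(C_e)$ and $S_H(e)\cap E(C_f)=\emptyset$ for $f\neq e$), observes that the components of $H'$ with an odd number of negative edges come in even number, and adds the symmetric difference of paths in $H/H'$ pairing them up; the bridges in $B_g(H)$ are then picked up automatically because each such bridge separates the odd components into two odd-sized classes and hence lies on an odd number of the chosen paths. Your route is completable but requires you to actually carry out the join analysis above; as it stands, the assembly step is a stated obstacle rather than a proof.
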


\begin{proof}
Let  $T=H-E_N(H)$. Then $E(H)$ is the disjoint union of
$E(T)$ and  $E_N(H)$.
For each $e \in E_N(H)$, let $C_e$ be the unique circuit of $T+e$.

Let $H' = \bigtriangleup_{e \in E_N(H)}C_e$ and
$O_{H'}$ be the set of all components of $H'$ containing an odd number of negative edges. Since $|E_N(H)|$ is even, so is $|O_{H'}|$. Let $\{v_1,v_2,\cdots,v_{2t}\}$ be the set of vertices of the contracted graph $H/H'$ corresponding to $O_{H'}$. For $i=1,\cdots, t$, there is a shortest path $P_i$ in $H/H'$ from $v_{2i-1}$ to $v_{2i}$. Note that $E_N(H)\subseteq E(H')$ and hence $E(P_i)\subseteq E(H/H')\subseteq E(T)$. Since $T$ is a tree of $H$, $H'' =H'\cup (\bigtriangleup_{i=1}^tP_i)$ is a generalized barbell.

For every bridge $b\in B_g(H)$, each component of $H-b$ contains an odd number of negative edges since $|E_N(H)|$ is even, and thus contains an odd number of members of $O_{H'}$. This fact implies that $b$ must belong to an odd number of members of $\{P_1,\cdots,P_t\}$ and thus $b\in E(H'')$. Hence $B_g(H)\subseteq E(H'')$.
For every $e\in E_N(H)$, it is obvious that $S_H(e)\subseteq  E(C_e)$ and $S_H(e)\cap E(C_{f})=\emptyset$ for any $f\in E_N(G)-\{e\}$, which implies that $S_H(e)\subseteq E(H')$. Therefore, $\cup_{e\in E_N(H)}S_H(e)\subseteq E(H')\subseteq E(H'')$.
\end{proof}

\begin{lemma}\label{LE: s-bridgeless}
Let $H$ be a signed graph satisfying that $|E_N(H)|\ge 2$ and $H-E_N(H)$ is a spanning tree of $H$. Then $H$ has a signed circuit $\{0,1,2,3\}$-cover such that each edge of $B_s(H)\cup (\cup_{e\in E_N(H)}S_H(e))$ is covered at least once and each negative loop (if any) is covered precisely twice.
\end{lemma}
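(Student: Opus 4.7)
My plan is to prove Lemma~\ref{LE: s-bridgeless} by cases on the parity of $|E_N(H)|$, leveraging Lemma~\ref{LE: g-bridgeless} together with the covering lemmas for generalized barbells, namely Lemma~\ref{SCDC of g-barbell} and Lemma~\ref{1,2-cover of g-barbell}.

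\emph{Case 1: $|E_N(H)|$ is even.} First invoke Lemma~\ref{LE: g-bridgeless} to obtain a generalized barbell $H^*$ of $H$ with $B_g(H)\cup\bigcup_{e\in E_N(H)}S_H(e)\subseteq E(H^*)$. Since $|E_N(H)|$ is even, the containment $B_g(H)\subseteq B_s(H)$ is automatic, and the only potential gap is $B_s(H)\setminus B_g(H)$, which consists of bridges $b$ whose two components of $H-b$ each contain a positive, even number of negatives. For each such $b$ I would pick negatives $e_1,e_2$ on opposite sides of $b$ and enlarge $H^*$ by absorbing the balanced cycle $C_{e_1}\triangle C_{e_2}$ in $T+e_1+e_2$, which necessarily passes through $b$; I then verify that the enlarged graph still satisfies the parity condition of Definition~\ref{g-barbell} so as to remain a generalized barbell. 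Finally, applying Lemma~\ref{SCDC of g-barbell} to the enlarged $H^*$ yields a signed circuit double cover of $H^*$, which, extended by multiplicity $0$ on $E(H)\setminus E(H^*)$, is the desired $\{0,2\}$-cover: every edge of $B_s(H)\cup\bigcup S_H(e)$ is covered twice, and every negative loop (lying in $\bigcup S_H(e)\subseteq E(H^*)$) is covered exactly twice.

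\emph{Case 2: $|E_N(H)|$ is odd.} Here $|E_N(H)|\ge 3$. I single out a distinguished negative edge $e^*\in E_N(H)$ and apply Case 1 to $H-e^*$, which retains $T$ as a spanning tree and has $|E_N(H-e^*)|=|E_N(H)|-1$ even and $\ge 2$. This produces a signed circuit cover $\mathcal{F}_1$ of $H-e^*$ covering $B_s(H-e^*)\cup\bigcup_{e\ne e^*}S_{H-e^*}(e)$, with negative loops covered exactly twice. To incorporate $e^*$ and $S_H(e^*)$, I adjoin a single extra signed circuit: a long or short barbell pairing $C_{e^*}$ with an unbalanced circuit $C_{e'}$ ($e'\in E_N(H)\setminus\{e^*\}$) through a path in $T$ containing $S_H(e^*)\setminus\{e^*\}$. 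This may push certain edges from multiplicity $2$ to multiplicity $3$, still within $\{0,1,2,3\}$. If $e^*$ is a negative loop the adjoined barbell is taken twice so that $e^*$ is covered exactly twice; the edge $e'$ is selected (preferably non-loop) and the connecting path chosen to keep multiplicities $\le 3$ and to preserve the exact-$2$ loop condition for every other negative loop.

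\emph{Main obstacle.} The delicate part is Case 2 when $e^*$ is forced to be a negative loop and when many other negatives are also loops, because the extra barbell(s) needed to cover $e^*$ exactly twice and to reach all of $S_H(e^*)$ may clash with the double cover $\mathcal{F}_1$ produced in Case 1, jeopardizing both the multiplicity-$3$ ceiling and the exact-twice condition on the other loops. A case analysis based on whether non-loop negatives exist, together with a careful choice of $e'$ and of the connecting tree path relative to the structure of $H^*$, will be needed to close this. Case 1's enlargement step is more routine but still requires a bookkeeping check that Definition~\ref{g-barbell}'s parity condition is preserved after each balanced-cycle addition.
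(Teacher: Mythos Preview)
Your Case~1 contains a concrete error. If $b\in B_s(H)\setminus B_g(H)$ is a bridge with $e_1$ and $e_2$ on opposite sides, then $b$ is a positive edge of the spanning tree $T=H-E_N(H)$, and the two components of $T-b$ coincide with those of $H-b$. Consequently the fundamental cycle $C_{e_1}$ lies entirely in the component of $T-b$ containing the endpoints of $e_1$, and similarly for $C_{e_2}$; neither uses $b$, and hence $C_{e_1}\triangle C_{e_2}=C_{e_1}\cup C_{e_2}$ does \emph{not} pass through $b$. Your enlargement of $H^*$ therefore fails to absorb the bridges of $B_s(H)\setminus B_g(H)$, and no signed circuit double cover of the enlarged graph will cover them. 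This in turn breaks Case~2, which is built on Case~1. Case~2 has a further independent gap: deleting $e^*$ can eject bridges from $B_s$ (a bridge $b\in B_s(H)$ whose one side contained only $e^*$ satisfies $b\notin B_s(H-e^*)$), so the recursive cover of $H-e^*$ need not cover all of $B_s(H)$, and you have not argued that the single adjoined barbell picks up every such bridge.

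The paper circumvents this by a minimum-counterexample argument whose first step eliminates all bridges: for $b\in B_s(H)$ it splits $H$ at $b$, attaches a negative loop at each cut vertex, recurses on the two smaller pieces, and then splices the two barbells through $b$; for $b\in B(H)\setminus B_s(H)$ it simply discards the side with no negatives. Only after $B(H)=\varnothing$ does the paper invoke Lemma~\ref{LE: g-bridgeless} (for even $|E_N|$) or a more delicate structural analysis of the sets $S_H(e)$ (for odd $|E_N|$) to isolate a single negative edge whose removal leaves a $2$-edge-connected graph with even negatives. The negative-loop splicing trick is precisely the missing idea that lets one cover edges of $B_s(H)$ by signed circuits; without it your plan cannot handle bridges in $B_s(H)\setminus B_g(H)$.
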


\begin{proof}
Let $H$ be a counterexample with $|E(H)|$ minimum.

\begin{claim}\label{CL: bridges}
$B(H)=\emptyset$.
\end{claim}

\noindent {\em Proof of Claim \ref{CL: bridges}.} Suppose to the contrary that $B(H)\neq \emptyset$. Let $b=u_1u_2\in B(H)$ and $Q_1$ and $Q_2$ be the two components of $H-b$ such that $u_i\in Q_i$ for $i=1,2$.

If $b\in B(H)-B_s(H)$, then there is one member in $\{Q_1,Q_2\}$, without loss of generality, say $Q_1$, satisfying that $B_s(Q_1)=B_s(H)$ and $E_N(Q_1)= E_N(H)$. By the minimality of $H$, $Q_1$ (and thus $H$) has a desired signed circuit $\{0,1,2,3\}$-cover, a contradiction.

If $b\in B_s(H)$, then $|E_N(Q_1)|\ge 1$ and $|E_N(Q_2)|\ge 1$.
For each $i=1,2$, let $Q_i^*$ be the graph obtained from $Q_i$ by adding a negative loop $e_i$ at $u_i$. It is easy to see that  $B_s(Q_1^*)\cup B_s(Q_2^*)=B_s(H)-\{b\}$ and $\cup_{i=1}^2(E_N(Q_i^*)-\{e_i\})=E_N(H)$. By the minimality of $H$, each $Q_i^*$ has a signed circuit $\{0,1,2,3\}$-cover $\mathcal{F}_i^*$ which covers each edge of $B_s(Q_i^*)\cup E_N(Q_i^*)$ at least once and covers each negative loop of $Q_i^*$ exactly twice.
Let $C_i^1$ and $C_i^2$ be the two signed circuits in
$\mathcal{F}_i^*$ containing $e_i$. Since $e_i$ is a negative loop, $C_i^j$ ($j=1,2$) is a barbell of $Q_i^*$, and so $C^j=(C_1^j-e_1)\cup (C_2^j-e_2)+b$ is also a barbell of $H$.  Therefore, $\mathcal{F}=(\mathcal{F}_1^*-\{C_1^1,C_1^2\})\cup (\mathcal{F}_2^*-\{C_2^1,C_2^2\})\cup \{C^1,C^2\}$ is a  desired signed circuit $\{0,1,2,3\}$-cover of $H$, a contradiction. $\Box$

\medskip

Claim \ref{CL: bridges} implies that $H$ is $2$-edge-connected. So Lemma \ref{LE: s-bridgeless} follows from Lemmas \ref{LE: g-bridgeless} and \ref{SCDC of g-barbell} if $|E_N(H)|$ is even. Since $|E_N(H)|\ge 2$, in the following, we  assume that $|E_N(H)|\ge 3$ is odd.

Let  $T=H-E_N(H)$. Note that $T$ is a spanning tree of $H$ and $E(H)$ is the disjoint union of
$E(T)$ and  $E_N(H)$.
For each $e \in E_N(H)$, let $C_e$ be the unique circuit of $T+e$.

\begin{claim}\label{CL: s-bridgeless}
For every $e\in E_N(H)$, $H$ has a signed circuit containing all edges of $S_H(e)$.
\end{claim}

\noindent {\em Proof of Claim \ref{CL: s-bridgeless}.} Let $e\in E_N(H)$ and $f\in E_N(H)-\{e\}$. Note that $S_H(e)\subseteq E(C_e)$, $S_H(f)\subseteq E(C_f)$ and $S_H(e)\cap S_H(f)=\emptyset$ (it can be checked easily since $T=H-E_N(H)$ is a spanning tree of $H$). If $|V(C_e)\cap V(C_f)|\leq 1$, then there is a shortest path $P$ in $T$  joining $C_e$ to $C_f$ (note that $P$ is a single vertex if $|V(C_e)\cap V(C_f)|=1$), and so $C_e\cup C_f\cup P$ is a desired signed circuit.  If $|V(C_e)\cap V(C_f)|\ge 2$, since $T$ is a spanning tree of $H$, then $C_{e} \cap C_{f}$ is a path containing no edges of $S_{H}(e)$. Thus $C_e\Delta C_f$ is a balanced circuit as desired.~$\Box$

\begin{claim}\label{CL: 2-edge-cut}
Each edge $e\in E_N(H)$ is contained in a $2$-edge-cut of $H$.
\end{claim}

\noindent {\em Proof of Claim \ref{CL: 2-edge-cut}.}  Suppose to the contrary then there is a negative edge $e\in E_N(H)$ such that $H_0=H-e$ remains $2$-edge-connected. If $H$ contains negative loops, we choose $e$ which is a negative loop.

Since $H_0$ is $2$-edge-connected and $|E_N(H_0)|=|E_N(H)-\{e\}|\ge 2$ is even, Lemma \ref{LE: g-bridgeless} implies that $H_0$ has a generalized barbell $H_1$ containing all edges of $\cup_{f\in E_{N}(H_0)}S_{H_0}(f)$.  Let $\mathcal{F}_1$ be a signed circuit double cover of $H_1$ by Lemma \ref{SCDC of g-barbell}. Note that $S_H(e)=\{e\}$ and $S_H(f)\subseteq S_{H_0}(f)$ for any $f\in E_N(H_0)=E_N(H)-\{e\}$. Thus $\cup_{f\in E_N(G)}S_{H}(f)\subseteq \{e\}\cup (\cup_{f\in E_{N}(H_0)}S_{H_0}(f))$.

If $e$ is not a negative loop of $H$,  then $H$ has no loop, but has a signed circuit $C$ containing $e$  by Claim \ref{CL: s-bridgeless}. Thus $\mathcal{F}=\mathcal{F}_1\cup \{C\}$ is a signed circuit $\{0,1,2,3\}$-cover of $H$ covering all edges of $\cup_{f\in E_N(H)}S_{H}(f)$, a contradiction.

Assume that $e$ is a negative loop of $G$ and let $u$ denote the unique endvertex of $e$.

 If $\mathcal{F}_1$ contains a barbell $C$, then let $C_1$ and $C_2$ be the two unbalanced circuits of $C$. Since $H$ is $2$-edge-connected, there are two edge-disjoint paths in $H$ from $u$ to $C_1$ and $C_2$, denoted by $P_1$ and $P_2$, respectively. Then $C_i'=C_i\cup P_i+e_0$ for $i=1,2$  is a barbell of $H$. Since $\mathcal{F}_1$ is a signed circuit double cover of $H_1$,
$\mathcal{F}=(\mathcal{F}_1-C)\cup \{C_1', C_2'\}$ is a desired signed circuit $\{0,1,2,3\}$-cover of $H$, a contradiction.

 If $\mathcal{F}_1$ contains no barbells, then $e$ is the unique loop of $H$. Note that $H_1$ is a generalized barbell. By Lemma \ref{1,2-cover of g-barbell}, $H_1$ has either a decomposition $\mathcal{F}_1'$ into balanced circuits or a signed circuit $\{1,2\}$-cover $\mathcal{F}_1''$ and two edge-disjoint unbalanced circuit $C_1$ and $C_2$ such that  each edge in $E(C_1)\cup E(C_2)$ is covered by $\mathcal{F}_1''$  exactly once.
In the former case, let $C'$ be a signed circuit containing $e$ by Claim \ref{CL: s-bridgeless}. Then the family $\mathcal{F}=\mathcal{F}_1'\cup \{C',C'\}$ is a desired signed circuit $\{0,1,2,3\}$-cover of $H$.
 In the latter case, since $H$ is $2$-edge-connected, there are two edge-disjoint paths of $H$ from $u$ to $C_1$ and $C_2$, denoted by $P_1$ and $P_2$, respectively.  Similar to the case when $\mathcal{F}_1$ contains a barbell, we can construct a desired signed circuit $\{0,1,2,3\}$-cover of $H$, and thus obtain a contradiction. $\Box$

  \medskip

  By Claim \ref{CL: 2-edge-cut}, $H$ contains no negative loops and $|S_H(e)|\ge 2$ for each $e\in E_N(H)$. For every $e\in E_N(G)$, let $\mathcal{M}_e$ denote the set of all components of the subgraph $H-S_H(e)$.

\begin{claim}\label{claim4}
For two distinct $e, e'\in E_N(H)$, $S_H(e')$ is contained in exactly one member of $\mathcal{M}_e$.
\end{claim}

\noindent {\it Proof of Claim \ref{claim4}.}
Note that each member of $\mathcal{M}_e$ is $2$-edge-connected, and $S_H(e)\cap S_H(e')=\emptyset$ since $H-E_N(H)$ is a spanning tree of $H$.  Then $S_H(e')\subseteq \cup_{M\in \mathcal{M}_e}E(M)$.
Let $e^*$ be an arbitrary edge in $S_H(e')-\{e'\}$.  If there are two distinct members $M_i$ and $M_j$ of $\mathcal{M}_e$ such that $e'\in E(M_i)$ and $e^*\in E(M_j)$,
then both $M_i-e'$ and $M_j-e^*$ are connected, and so $H-\{e',e^*\}$ is also connected.
This contradicts that $\{e',e^*\}$ is a $2$-edge-cut of $H$. So $e'$ and $e^*$ are contained in a common member of $\mathcal{M}_e$.
The arbitrariness of $e^*$ implies that the claim holds.~$\Box$

\medskip
For every $e\in E_N(H)$, let $m_e=\max\{|E_N(H)\cap E(M)| : M\in \mathcal{M}_e\}$. It is obvious that $m_e\leq |E_N(H)|-1$ since $e\notin \cup_{M\in \mathcal{M}_e}E(M)$.

\begin{claim}\label{claim3}
$\max\{m_e : e\in E_N(H)\}=|E_N(H)|-1$.
 \end{claim}

\noindent {\it Proof of Claim \ref{claim3}.}  Let $e_0\in E_N(H)$ and $M_{01}\in \mathcal{M}_{e_0}$ such that $m_{e_0}=|E_N(H)\cap E(M_{01})|=\max\{m_e : e\in E_N(H)\}$.
Suppose that $m_{e_0}<|E_N(H)|-1$. Then there is a member $M_{02}\in \mathcal{M}_{e_0}-\{M_{01}\}$ such that $M_{02}$ contains a negative edge $e_1$ of $H$.

By Claim \ref{claim4}, $S_H(e_1)\subseteq E(M_{02})$ and there is a member $M_{11}\in \mathcal{M}_{e_1}$ such that $S_H(e_0)\subseteq E(M_{11})$. So
$$\{e_0\}\cup E(M_{01})\subseteq S_H(e_0)\cup (\cup_{M\in \mathcal{M}_{e_0}-\{M_{02}\}}E(M))\subseteq E(M_{11}),$$
which implies that
$$m_{e_1}\ge |E_N(H)\cap E(M_{11})|\ge 1+|E_N(H)\cap E(M_{01})|=1+m_{e_0}.$$
This contradicts the choice of $e_0$, and so the claim holds.~$\Box$

\medskip
By Claim \ref{claim3}, there is an edge $e\in E_N(H)$ such that $E_N(H)-\{e\}$ is contained in exactly one  member of $\mathcal{M}_{e}$.
Let  $\mathcal{M}_e=\{M_1', \cdots, M_s'\}$. Without loss of generality, assume that $E_N(H)-\{e\}\subseteq E(M_1')$ and all edges of $M_i'$ ($i=2,\cdots, s$) are positive.
Since $H$ is $2$-edge-connected, it follows from the definition of $S_H(e)$ that $H/\cup_{i=1}^sM_i'$ is a circuit, and each $M_i'$  is also $2$-edge-connected.
Since $|E_N(M_1')|=|E_N(H)|-1\ge 2$ is even, $M_1'$ has a generalized barbell $H_1'$ containing all edges of $\cup_{f\in E_N(M_1')}S_{M_1'}(f)$ by Lemma \ref{LE: g-bridgeless}, and  $H_1'$
has a signed circuit double cover $\mathcal{F}_1$ by Lemma~\ref{SCDC of g-barbell}.

Since $E_N(M_1')=E_N(H)-\{e\}$ and  $S_{M_1'}(f)\supseteq S_H(f)$ for any $f\in E_N(M_1')$,
$$\cup_{f\in E_N(H)}S_H(f)\subseteq S_H(e)\cup (\cup_{f\in E_N(M_1')}S_{M_1'}(f)).$$
By Claim \ref{CL: s-bridgeless}, $H$ has a signed circuit $C$ containing all edges of $S_H(e)$, and so $\mathcal{F}=\mathcal{F}_1\cup \{C\}$ is a desired signed circuit $\{0,1,2,3\}$-cover of $H$, a contradiction.
This complete the proof of the lemma.
\end{proof}

 \begin{lemma}\label{cut}{\rm \cite{LLZ2015}}
Let $G$ be a signed graph. Then $|E_N(G)|=\epsilon_N(G)$ if and only if for every  edge cut $T$ of $G$,
  $$|E_N(G)\cap T|\leq \frac{|T|}{2}.$$
  \end{lemma}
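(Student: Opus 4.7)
The plan is to describe explicitly the class $\mathcal{X}$ of signed graphs obtainable from $G$ by a sequence of switching operations, then reduce the lemma to a single counting identity. First I would observe that vertex switchings commute and each is an involution, so applying switchings at any sequence of vertices is equivalent to performing a single ``set switching'' at the subset $U \subseteq V(G)$ consisting of the vertices that were switched an odd number of times. For any such $U$, an edge $uv$ has its sign flipped precisely when exactly one of $u, v$ lies in $U$, so the resulting signed graph $G_U$ satisfies
\[ E_N(G_U) \;=\; E_N(G) \bigtriangleup \delta_G(U). \]
In particular, $\mathcal{X} = \{ G_U : U \subseteq V(G) \}$, and edge cuts of $G$ are exactly the sets of the form $\delta_G(U)$.

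Next I would use the identity $|A \bigtriangleup B| = |A| + |B| - 2|A \cap B|$ with $A = E_N(G)$ and $B = \delta_G(U)$ to obtain
\[ |E_N(G_U)| \;=\; |E_N(G)| + |\delta_G(U)| - 2\,|E_N(G) \cap \delta_G(U)|, \]
so that
\[ |E_N(G_U)| \;\geq\; |E_N(G)| \quad\Longleftrightarrow\quad |E_N(G) \cap \delta_G(U)| \;\leq\; \tfrac{1}{2}|\delta_G(U)|. \]

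Both directions of the lemma now fall out from this equivalence. For the forward direction, assume $|E_N(G)| = \epsilon_N(G)$ and let $T$ be any edge cut of $G$. Write $T = \delta_G(U)$; then by minimality $|E_N(G_U)| \geq |E_N(G)|$, so the displayed equivalence yields $|E_N(G) \cap T| \leq |T|/2$. For the reverse direction, the hypothesis says $|E_N(G) \cap \delta_G(U)| \leq |\delta_G(U)|/2$ for every $U \subseteq V(G)$, hence $|E_N(G_U)| \geq |E_N(G)|$ for every $G_U \in \mathcal{X}$, giving $|E_N(G)| = \epsilon_N(G)$ as required.

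The main (and essentially only) obstacle is the translation from ``a sequence of vertex switchings'' into the clean description as a single set switching indexed by a vertex subset; this is where one must justify that the switching group is abelian of exponent $2$, so that the order and multiplicity of the individual switchings are irrelevant. Once this is established, the rest of the argument is a one-line counting identity, and no further structural information about $G$ (such as connectivity, presence of negative loops, or signedness of particular edges) is needed.
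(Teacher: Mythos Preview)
The paper does not supply its own proof of this lemma; it is simply quoted from the cited reference \cite{LLZ2015}. Your argument is correct and is in fact the standard proof: once the switching class is identified with $\{G_U : U \subseteq V(G)\}$ via the observation that switchings commute and square to the identity, the relation $E_N(G_U) = E_N(G)\,\bigtriangleup\,\delta_G(U)$ together with $|A\bigtriangleup B| = |A|+|B|-2|A\cap B|$ gives the equivalence immediately. The only point worth a remark is the treatment of loops: under the usual convention a loop at $u$ has both ends at $u$ and is therefore unaffected by switching, while a loop never lies in any $\delta_G(U)$, so the identity $E_N(G_U) = E_N(G)\,\bigtriangleup\,\delta_G(U)$ is consistent for loops as well.
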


We now prove Lemma \ref{LE: a pair}.

\medskip

\noindent {\bf Proof of Lemma \ref{LE: a pair}.} Let $G$ be an s-bridgeless signed graph with $|E_N(G)|=\epsilon_N(G)$. Without loss of generality, we further assume that $G$ is connected. Since $G$ is s-bridgeless, $|E_N(G)|\neq 1$. If $|E_N(G)|=0$, then $G$ is a $2$-edge-connected unsigned graph. The lemma is trivial, and thus assume that $|E_N(G)|\ge 2$.

 Let $$ G_1=G-B(G)- (\cup_{e\in E_N(G)}S_G(e)).$$
By the definitions of $B(G)$ and $S_G(e)$, $G_1$ contains no negative edges of $G$ and is bridgeless.

To construct $G_2$, let $H=T+E_N(G)$, where $T$ is a spanning tree of $G-E_N(G)$ (the existence of $T$ is guaranteed by Lemma \ref{cut}).
Note that we have the following simple facts:

(1) $E_N(G)=E_N(H)$;

(2) $B_g(G)\subseteq B_g(H)$;

(3) $B_s(G)\subseteq B_s(H)$;

(4) $S_G(e)\subseteq S_H(e)$ for each $e\in E_N(G)$.

By Lemma \ref{LE: s-bridgeless}, $H$ has a signed circuit $\{0,1,2,3\}$-cover $\mathcal{F}_2$ such that each edge of $B_s(H)\cup (\cup_{e\in E_N(H)}S_H(e))$ $(\supseteq B_s(G)\cup (\cup_{e\in E_N(G)}S_G(e)))$ is covered by $\mathcal{F}_2$ at least once.
 Let $G_2=G[\cup_{C\in \mathcal{F}_2}E(C)]$. Since $G$ is s-bridgeless, $B_s(G)=B(G)$, and so $E(G)=E(G_1)\cup E(G_2)$. It is obvious that $G_2-E_N(G)$ is acyclic and $\mathcal{F}_2$ is a desired signed circuit $\{1,2,3\}$-cover of $G_2$.

In particular, if $G$ is g-bridgeless with even negativeness, then $B_g(G)=B(G)$ and by Lemma \ref{LE: g-bridgeless}, $H$ has a generalized barbell, denoted by $G_2$, containing all edges of $B_g(H)\cup (\cup_{e\in E_N(H)}S_H(e))$ $(\supseteq B_g(G)\cup (\cup_{e\in E_N(G)}S_G(e)))$. Thus $E(G)=E(G_1)\cup E(G_2)$, $G_2-E_N(G)$ is acyclic and by Lemma \ref{SCDC of g-barbell}$, G_2$ has a signed circuit double cover.
This proves Lemma \ref{LE: a pair}. $\Box$


\section{Proof of Theorem~\ref{TH: main result g}}
\label{SS: final proof}
In this section, we complete the proof of Theorems~\ref{TH: main result g} by applying Lemma \ref{LE: a pair}.
Let $G$ be an s-bridgeless signed graph with $\epsilon_N(G)> 0$.
We only need to consider the case $|E_N(G)|=\epsilon_N(G)$ since the existence and the length of a signed circuit cover are two invariants under the switching operations.

Since $G$ is s-bridgeless and $\epsilon_N(G)> 0$, we have that $|E_N(G)|=\epsilon_N(G)\ge 2$.
If $G$ contains positive loops, then we may consider the subgraph  obtained from $G$ by deleting all positive loops. Thus we further assume that  $G$ contains no positive loops.

By Lemma \ref{LE: a pair},  $G$ has a bridgeless unsigned subgraph $G_1$ and a signed subgraph $G_2$ such that $E(G_1) \cup E(G_2) = E(G)$, $G_2-E_N(G)$ is acyclic and $G_2$ has a signed circuit $\{1,2,\cdots, k\}$-cover $\mathcal{F}_2$, where $k=2$ if $G$ is  g-bridgeless  with even negativeness and $k=3$ otherwise.

 Note that $E(G_1)\subseteq G-E_N(G)$ and thus  $E(G_1)\cap E(G_2) \subseteq E(G_2)-E_N(G)$ is acyclic. Hence we have the following two inequalities.

 \begin{eqnarray}
 \label{E1+E2}
 |E(G_1)|+|E(G_2)|=|E(G_1)\cup E(G_2)|+|E(G_1)\cap E(G_2)|\leq&|E(G)|+ |V(G)|-1
 \end{eqnarray}
  \begin{eqnarray}
  \label{E2}
|E(G_2)|\leq (|V(G)|-1)+|E_N(G)|=|V(G)|-1+\epsilon_N(G).
 \end{eqnarray}

 Let $\mathcal{F}_2'$ be a subset of $\mathcal{F}_2$ such that  $\mathcal{F}_2'$ is still a signed circuit cover of $G_2$ and the number of signed circuits of $\mathcal{F}_2'$ is as small as possible. We have the following claim.
 \begin{claim}
 \label{claimf2}
  $\ell(\mathcal{F}_2')=|E(G_2)|\leq k|E(G_2)|-2(k-1).$
 \end{claim}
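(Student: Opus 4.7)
My plan splits the claim into its two parts. The inequality $|E(G_2)|\le k|E(G_2)|-2(k-1)$ is equivalent to $(k-1)(|E(G_2)|-2)\ge 0$, so for $k\in\{2,3\}$ it suffices to verify $|E(G_2)|\ge 2$. Since $\mathcal{F}_2$ is nonempty, $G_2$ contains at least one signed circuit, which must have length at least $2$: positive loops are excluded because $G_2-E_N(G)$ is acyclic, and a lone negative loop is not itself a signed circuit by Definition \ref{SCC}.

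The substantive equality $\ell(\mathcal{F}_2')=|E(G_2)|$ is equivalent to saying that $\mathcal{F}_2'$ covers each edge of $G_2$ exactly once. My starting observation is that the minimality of $|\mathcal{F}_2'|$ forces every $C\in\mathcal{F}_2'$ to contain a \emph{private} edge $p_C\in E(C)$ that is covered by no other member of $\mathcal{F}_2'$; otherwise $\mathcal{F}_2'\setminus\{C\}$ would remain a cover and contradict minimality. I then argue by contradiction: suppose some edge $e$ is covered by two distinct circuits $C_1,C_2\in\mathcal{F}_2'$, and the goal is to exhibit a strictly smaller covering subset of $\mathcal{F}_2$.

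To construct the smaller cover I would use two ingredients. First, $G_2-E_N(G)$ is a forest, so every cycle of $G_2$ uses at least one negative edge; this sharply constrains how two signed circuits can share the edge $e$, since the shared subgraph must essentially be an arc between two negative edges, or a common unbalanced subcircuit of a barbell, or a common connecting path. Second, the specific construction of $\mathcal{F}_2$ given in the proof of Lemma \ref{LE: a pair}, namely a signed circuit double cover of a generalized barbell via Lemma \ref{SCDC of g-barbell} in the g-bridgeless-even case, or the tailored $\{0,1,2,3\}$-cover of Lemma \ref{LE: s-bridgeless} otherwise, is built by a concrete inductive procedure; tracing through that procedure should yield a signed circuit $D\in\mathcal{F}_2\setminus\mathcal{F}_2'$ such that $(\mathcal{F}_2'\setminus\{C_1\})\cup\{D\}$ still covers $G_2$ and moreover triggers the cascade removal of at least one further circuit (because $D$ simultaneously covers the private edges $p_{C_1}$ and $p_{C'}$ of some other $C'\in\mathcal{F}_2'$), producing a cover of cardinality strictly less than $|\mathcal{F}_2'|$ and contradicting minimality.

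The principal obstacle is precisely this rearrangement step: minimality of $|\mathcal{F}_2'|$ alone provides only private edges, not the absence of multiply-covered edges, so the inductive construction of $\mathcal{F}_2$ together with the forest structure of $G_2-E_N(G)$ and the $\{1,\ldots,k\}$-multiplicity bound must be exploited jointly to produce the required reduction. Carrying through the resulting case analysis (by cycle-type of $C_1$ and $C_2$, and by intersection pattern with the negative edges of $G_2$) will be the delicate part of the proof.
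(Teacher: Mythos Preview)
Your proposal has a fundamental misreading of the claim. The displayed chain
\[
\ell(\mathcal{F}_2')=|E(G_2)|\leq k|E(G_2)|-2(k-1)
\]
is a typo in the paper: what is actually proved (and all that is ever used downstream) is the single inequality $\ell(\mathcal{F}_2')\leq k|E(G_2)|-2(k-1)$. The equality $\ell(\mathcal{F}_2')=|E(G_2)|$ holds only in the special case where $\mathcal{F}_2'$ consists of a single signed circuit; in general it is \emph{false}. Minimality of $|\mathcal{F}_2'|$ among sub\-families of $\mathcal{F}_2$ does not force a $1$-cover, and no amount of case analysis on intersection patterns will manufacture the missing circuit $D$ you describe, because such a $D$ need not exist in $\mathcal{F}_2$. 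So the entire second half of your plan is aimed at a target that cannot be hit.

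The paper's actual argument is the elementary counting you already began. Let $t=|\mathcal{F}_2'|$. Your private-edge observation gives at least $t$ edges of $G_2$ covered exactly once, and every edge is covered at most $k$ times since $\mathcal{F}_2'\subseteq\mathcal{F}_2$; hence
\[
\ell(\mathcal{F}_2')\;\leq\; t + k\bigl(|E(G_2)|-t\bigr)\;=\;k|E(G_2)|-(k-1)t.
\]
If $t\geq 2$ this is at most $k|E(G_2)|-2(k-1)$. If $t=1$ then $G_2$ itself is a single signed circuit, so $\ell(\mathcal{F}_2')=|E(G_2)|\geq 2$ (no positive loops), and your own rearrangement $(k-1)(|E(G_2)|-2)\geq 0$ finishes this case. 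That is the whole proof; no rearrangement of covers and no appeal to the inductive construction of $\mathcal{F}_2$ is needed.
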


 \noindent {\it Proof of Claim~\ref{claimf2}.}  Let $t$ be the number of signed circuits in $\mathcal{F}_2'$. Since $|E_N(G_2)|=|E_N(G)|\ge 2$, $t\ge 1$. By the choice of $\mathcal{F}_2'$,  every signed circuit in $\mathcal{F}_2'$ has an edge which is covered by $\mathcal{F}_2'$ exactly once, and so $G_2$ has at least $t$ edges which are covered by $\mathcal{F}_2'$ exactly once. Note that $k=2$ or $3$, and each signed circuit in $\mathcal{F}_2$ is of length at least $2$ since $G$ has no positive loops.
   If $t=1$, then $G_2$ is the unique signed circuit in $\mathcal{F}_2'$, and so
  $\ell(\mathcal{F}_2')=|E(G_2)|\leq k|E(G_2)|-2(k-1).$
  If $t\ge 2$, then
  $\ell(\mathcal{F}_2')\leq k(|E(G_2)|-t)+t=k|E(G_2)|-(k-1)t\leq k|E(G_2)|-2(k-1).$  $\Box$

Since $G_1$ is bridgeless and unsigned, by Theorems \ref{CC} and \ref{CCCC}, $G_1$ has a circuit cover $\mathcal{F}_1$ with total length
\begin{eqnarray}
\label{eqf1}
\ell(\mathcal{F}_1)\leq \min\{\frac{5}{3}|E(G_1)|,|E(G_1)|+|V(G_1)|-1\}.
\end{eqnarray}

 Therefore, $\mathcal{F}=\mathcal{F}_1\cup \mathcal{F}_2'$ is a signed circuit cover of $G$ and by Claim~\ref{claimf2} and Equation~(\ref{eqf1}) together with Equations~(\ref{E1+E2}) and (\ref{E2}),
 the total length of $\mathcal{F}$ satisfies that
\begin{eqnarray*}
\ell(\mathcal{F})&=&\ell(\mathcal{F}_1)+\ell(\mathcal{F}_2')\\
                         &\leq& \min\{\frac{5}{3} |E(G_1)|,|E(G_1)|+|V(G_1)|-1\}+k|E(G_2)|-2(k-1)\\
                         &\leq & \min\{\frac{5}{3}(|E(G)|+|V(G)|-1)+(k-\frac{5}{3})(|V(G)|-1+\epsilon_N(G))-2(k-1),\\
                         && (|E(G)|+|V(G)|-1)+(|V(G)|-1)+(k-1)(|V(G)|-1+\epsilon_N(G))-2(k-1)\}\\
                                                  &=& \min\{ \frac{5}{3} |E(G)|+k|V(G)|+(k-\frac{5}{3})\epsilon_N(G)-(3k-2),\\
                                                  && |E(G)|+(k+1)|V(G)|+(k-1)\epsilon_N(G)-(3k-1)\}.
\end{eqnarray*}
This completes the proof of Theorem~\ref{TH: main result g}.




\end{document}